\documentclass[12pt, reqno]{amsart}
\usepackage{amsmath, amsthm, amscd, amsfonts, amssymb, graphicx, color}
\usepackage[bookmarksnumbered, colorlinks, plainpages]{hyperref}

\newtheorem{theorem}{Theorem}[section]
\newtheorem{lemma}[theorem]{Lemma}
\newtheorem{proposition}[theorem]{Proposition}
\newtheorem{corollary}[theorem]{Corollary}
\theoremstyle{definition}
\newtheorem{definition}[theorem]{Definition}
\newtheorem{example}[theorem]{Example}

\theoremstyle{remark}
\newtheorem{remark}[theorem]{Remark}
\numberwithin{equation}{section}

\begin{document}

\title[Baire 1 functions and the strong Choquet property]{Baire 1 functions and the strong Choquet property}

\author[\v Lubica Hol\'a]{\v Lubica Hol\'a}

\newcommand{\acr}{\newline\indent}

\address{\llap Slovak Academy of Sciences, Institute of Mathematics \acr
\v Stef\'anikova 49, 81473 Bratislava,
	\acr Slovakia}

\email{\textcolor[rgb]{0.00,0.00,0.84}{hola@mat.savba.sk}}

\subjclass[2020]{Primary 54C08, 54C30, 54C35, Secondary 54D45.}

\keywords{Baire 1 functions, $F_\sigma$-measurable functions, topology of uniform convergence on compacta, \v Cech-complete space, strong Choquet space}

\bigskip

\bigskip

\bigskip

\bigskip

\begin{abstract}
Let $X$ be a Tychonoff space and $B_1(X)$ be the space of real-valued Baire 1 functions. Using the classical Lebesgue's characterization of Baire 1 functions and the strong Choquet game we prove that if $X$ is locally compact or a topological sum of $\sigma$-compact spaces, the space $B_1(X)$ equipped with the topology $\tau_{UC}$ of uniform convergence on compacta, is a strong Choquet space. For the Sorgenfrey line $\Bbb S$, the space $(B_1(\Bbb S),\tau_{UC})$ is of the first Baire category in itself.
\end{abstract}

\maketitle

\section{Introduction}

\bigskip

A topological space $X$ is Baire if the intersection of any sequence of open dense subsets of $X$ is dense in $X$.
Being a Baire space is an important topological property for a
space and it is therefore natural to ask when function spaces are Baire. An interesting problem for the space $(C(X),\tau_{UC})$,
of continuous real-valued functions equipped with the topology $\tau_{UC}$ of uniform convergence on compacta, is a characterization of a topological space
$X$, for which $(C(X),\tau_{UC})$ is Baire.
This problem was solved for a locally compact space $X$ by Gruenhage and Ma in \cite{GM}, see also \cite{Ma}.
In our paper we prove that if $X$ is locally compact or a topological sum of $\sigma$-compact spaces, the space $(B_1(X),\tau_{UC})$, of Baire 1 functions equipped with the topology of uniform convergence on compacta, is a strong Choquet space. The strong Choquet property is stronger than Baire property.

The problem for $(C(X),\tau_p)$, the space of continuous real-valued functions equipped with the topology $\tau_p$ of pointwise convergence, was solved independently by
Pytkeev \cite{Py}, Tkachuk \cite{Tk} and van Douwen \cite{Do} and for the space $(B_1(X),\tau_p)$, of Baire 1 functions equipped with the topology of pointwise convergence, by Osipov \cite{Os}. Osipov in \cite{Os} also characterized a topological space $X$, for which $(B_1(X),\tau_p)$ is strong Choquet.

\section{Preliminaries}

\bigskip

Throughout this paper, all spaces are assumed to be Tychonoff.
We denote $\omega$ the set of
non-negative integers and $\Bbb{R}$ the space of real numbers with the usual metric.
The symbol $\overline A$ will stand for the closure of the set $A$ and Int$A$ for the interior of the set $A$.

Let $C(X,Y)$ be the space of all continuous functions from a topological space $X$ into a topological space $Y$. We use the following notation: $B_1(X,Y) = \{f \in Y^X: f$ is a pointwise limit of a sequence from $C(X,Y)\}$ and $F_\sigma(X,Y) = \{f \in Y^X: f^{-1}(V)$ is an $F_\sigma$ set for every open $V \subset Y\}$. The elements of $B_1(X,Y)$ are called functions
of the first Baire class or Baire 1 functions, and those of $F_\sigma(X,Y$) are called functions of the
first Borel class or $F_\sigma$ measurable functions. By $C(X)$ ($B_1(X)$, $F_\sigma(X)$) we denote the space of continuous (Baire 1, $F_
\sigma$ measurable) real-valued functions.

It is known that $B_1(X,Y) \subset F_\sigma(X,Y)$ for any topological space $X$ and any metric space $Y$ \cite{Ve}. If $Y = \{0, 1\}$ then $B_1(\Bbb R,Y) \ne F_\sigma(\Bbb R,Y)$. An overview of the results regarding the equality $B_1(X,Y) = F_\sigma(X,Y)$ can be found in the paper \cite{Ve}.
Baire in \cite{Ba} proved that if $X$ is an interval of reals $\Bbb R$, then $F_\sigma(X) = B_1(X)$. Lebesgue in \cite{Leb} proved that if $X$ is a metric space, then $F_\sigma(X) = B_1(X)$.

\bigskip

The following result of Laczkowich will be useful in our paper.

\begin{proposition}(\cite{La}) \label{la}
Let $X$ be a normal space. Then $F_\sigma(X) = B_1(X)$.
\end{proposition}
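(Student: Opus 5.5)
We prove the two inclusions separately. The inclusion $B_1(X)\subset F_\sigma(X)$ holds for every topological space. Let $f=\lim f_n$ pointwise with $f_n\in C(X)$. It suffices to check that $f^{-1}((a,\infty))$ and $f^{-1}((-\infty,b))$ are $F_\sigma$, since every open subset of $\Bbb R$ is a countable union of intervals $(a,b)=(a,\infty)\cap(-\infty,b)$, and finite intersections and countable unions of $F_\sigma$ sets are $F_\sigma$. We then use the identity
$$f^{-1}((a,\infty))=\bigcup_{k}\bigcup_{m}\bigcap_{n\ge m}\{x: f_n(x)\ge a+1/k\}.$$
Each inner set is closed by continuity of $f_n$, so the union is $F_\sigma$. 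The other half-line is handled symmetrically.

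For the inclusion $F_\sigma(X)\subset B_1(X)$ we use normality. First we treat the case where $f$ is bounded, say with values in $(0,1)$; the general case reduces to this by composing with a homeomorphism $h:\Bbb R\to(0,1)$. Indeed, $h\circ f$ is $F_\sigma$-measurable, and if $g_n\to h\circ f$ with $g_n$ continuous, we may clip $g_n$ into $[1/(n+2),1-1/(n+2)]$ and still have pointwise convergence to $h\circ f$, because the limit lies in the open interval. Then $h^{-1}\circ g_n$ are continuous and converge to $f$.

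In the bounded case we approximate $f$ uniformly by $F_\sigma$-measurable step functions. Fix $n$, cover $[0,1]$ by the open sets $(k/n-1/n,\,k/n+1/n)$, and write each preimage $U_k=f^{-1}((k/n-1/n,\,k/n+1/n))$ as an $F_\sigma$ set. A standard reduction argument, using that in any space the $F_\sigma$ sets that are also $G_\delta$ (the $\Delta^0_2$ sets) form an algebra and that $F_\sigma$ sets have the reduction property, yields a partition of $X$ into finitely many $\Delta^0_2$ sets $A_k\subset U_k$. The function $s_n=\sum_k (k/n)\chi_{A_k}$ then satisfies $|s_n-f|\le 1/n$. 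Since $B_1(X)$ is closed under uniform limits (in any space, by the standard diagonal argument with $\sum$ of differences bounded by $2^{-n}$ and clipping), and under finite sums, it suffices to show that $\chi_A\in B_1(X)$ for every $\Delta^0_2$ set $A$.

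This last step is the main obstacle, and it is where normality enters. Write $A=\bigcup_n F_n$ and $X\setminus A=\bigcup_n H_n$ with $F_n,H_n$ closed and increasing. The sets $F_n$ and $H_n$ are disjoint closed sets, so Urysohn's lemma in the normal space $X$ gives a continuous $g_n:X\to[0,1]$ with $g_n=1$ on $F_n$ and $g_n=0$ on $H_n$. For $x\in A$, eventually $x\in F_n$, so $g_n(x)=1$; for $x\notin A$, eventually $x\in H_n$, so $g_n(x)=0$. Hence $g_n\to\chi_A$ pointwise, and $\chi_A\in B_1(X)$. The remaining care lies in the reduction step producing a finite $\Delta^0_2$ partition subordinate to the cover $\{U_k\}$. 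I would do this by writing $U_k=\bigcup_m C_{k,m}$ with $C_{k,m}$ closed, enumerating all pairs $(k,m)$, and taking successive differences of the resulting sets. These differences are $\Delta^0_2$ and countably many, so I would group them by $k$: each $A_k$ is then a countable union of $\Delta^0_2$ sets, hence $F_\sigma$. Since the $A_k$ partition $X$, each complement is a finite union of $F_\sigma$ sets, so each $A_k$ is also $G_\delta$, which gives the required $\Delta^0_2$ partition.
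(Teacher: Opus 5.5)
\textbf{There is a genuine gap: the partition step fails as written.} Your first inclusion, the reduction to bounded $f$, the uniform-approximation step and the Urysohn step for $\Delta^0_2$ sets are all fine. The gap is the step you flagged as ``the remaining care''.

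The successive differences $E_j\setminus\bigcup_{i<j}E_i$ are of the form closed $\cap$ open. Such a set is $F_\sigma$ only when the open set involved is $F_\sigma$. That holds in perfect spaces, e.g.\ metrizable or perfectly normal ones, but not in normal spaces in general. For the same reason, the reduction property for $F_\sigma$ sets, which you assert ``in any space'', is not available: it fails, for instance, for the two closed halves of two copies of $[0,\omega_1]$ glued at $\omega_1$.

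Your procedure genuinely breaks even for a constant function. Take $X=\omega_1+1$, which is compact and hence normal, $f\equiv 3/8$ and $n=4$, so that $U_1=U_2=X$. Use the legitimate closed decompositions $C_{1,0}=\{\omega_1\}$, $C_{1,m}=X$ for $m\ge1$, and $C_{2,m}=X$. Enumerate $(1,0)$ before $(2,0)$. You then get $A_1=\{\omega_1\}$ and $A_2=[0,\omega_1)$. But $\{\omega_1\}$ is not $G_\delta$, and $[0,\omega_1)$ is not $F_\sigma$, since a closed set missing $\omega_1$ is bounded. So $\chi_{A_1}\notin B_1(X)$ and $s_n\notin B_1(X)$. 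As written, your argument proves the statement only for perfectly normal $X$. The Urysohn step is not where normality is really needed.

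\textbf{The missing idea} is to use normality \emph{before} the reduction, to upgrade the $F_\sigma$ preimages to $Zer_\sigma$-sets.
\begin{itemize}
\item In a normal space, let a closed set $P$ lie in a $G_\delta$ set $G=\bigcap_i G_i$. Take Urysohn functions $g_i$ with $g_i=0$ on $P$ and $g_i=1$ off $G_i$. Then $Z=\bigcap_i g_i^{-1}(0)$ is a zero-set with $P\subset Z\subset G$.
\item Now let $I$ be an open interval and write $I=\bigcup_j J_j$ with $J_j$ open and $\overline{J_j}\subset I$. Each $f^{-1}(J_j)$ is a countable union of closed sets.
\item Each of these closed sets lies in $f^{-1}(\overline{J_j})$. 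This set is $G_\delta$, because its complement is the preimage of an open set and hence $F_\sigma$. It is also contained in $f^{-1}(I)$.
\item Applying the first point to each closed piece shows that $f^{-1}(I)$ is a $Zer_\sigma$-set.
\end{itemize}

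At that point you are already finished, by the $Zer_\sigma$-characterization of Baire 1 functions quoted in the paper (LMZ, Exercise 3.A.1). Alternatively, your own route now goes through. Differences of zero-sets are of the form zero-set $\cap$ cozero-set, hence $Zer_\sigma$. So the pieces $A_k$ and their complements are all $Zer_\sigma$. The final separation step then only needs that disjoint zero-sets are completely separated, which holds in any space.

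The paper gives no proof of this proposition; it simply cites Laczkowich, so there is no in-paper argument to compare your route with.
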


The following Theorem was proved by Lebesgue.

\begin{theorem} \label{HL15} \rm{(cf. \cite[p. 375]{Ku},\cite{BHH})}
	Let $X$ be a metric space. For $f\colon X \to \Bbb R$ the following conditions are equivalent:
	\begin{itemize}
		\item[(1)] A function $f$ is $F_\sigma$ measurable;
		\item[(2)] For each $\varepsilon >0$ there is a cover $(X_i)_{i \in \omega}$ of $X$ consisting of closed sets such that $\text{diam}f(X_i) \le\varepsilon$, for all $i \in \omega$.
	\end{itemize}
	\end{theorem}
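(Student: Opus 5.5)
We prove the two implications separately, working in the metric space $X$ with metric $d$.

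For (2)$\Rightarrow$(1), we fix an open set $V\subset\Bbb R$ and must show $f^{-1}(V)$ is $F_\sigma$. For each $n\ge 1$ we apply (2) with $\varepsilon=1/n$ and obtain a closed cover $(X^n_i)_{i\in\omega}$ with $\text{diam} f(X^n_i)\le 1/n$. Let $V_n=\{y\in V: \text{dist}(y,\Bbb R\setminus V)>1/n\}$ (with $V_n=V$ if $V=\Bbb R$). We claim
$$f^{-1}(V)=\bigcup_{n\ge1}\bigcup\{X^n_i: X^n_i\cap f^{-1}(V_n)\ne\emptyset\}.$$
For the inclusion $\supseteq$: if $X^n_i$ meets $f^{-1}(V_n)$, then $f(X^n_i)$ lies within $1/n$ of a point of $V_n$, hence inside $V$. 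For $\subseteq$: given $x\in f^{-1}(V)$, choose $n$ with $f(x)\in V_n$ and $i$ with $x\in X^n_i$. The right-hand side is a countable union of closed sets, so $f^{-1}(V)$ is $F_\sigma$. This direction does not use the metric on $X$ at all.

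For (1)$\Rightarrow$(2), we fix $\varepsilon>0$ and cover $\Bbb R$ by countably many open intervals $I_k$ of length $\le\varepsilon$. By (1), each $f^{-1}(I_k)$ is $F_\sigma$, so $f^{-1}(I_k)=\bigcup_j F_{k,j}$ with each $F_{k,j}$ closed. Then $f(F_{k,j})\subset I_k$, so $\text{diam} f(F_{k,j})\le\varepsilon$. Since the $I_k$ cover $\Bbb R$, the countable family $(F_{k,j})_{k,j}$ covers $X$; reindexing it by $\omega$ gives the required cover.

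Both directions are elementary and only use that closed sets in $X$ and open intervals in $\Bbb R$ behave as expected; metrizability of $X$ is not needed for this equivalence. The only step needing care is the ``margin'' in the sets $V_n$ for (2)$\Rightarrow$(1): it guarantees that a small-diameter image meeting $V_n$ stays inside $V$. One must also check that every point of $f^{-1}(V)$ is captured for some $n$, which holds because $V$ is open.
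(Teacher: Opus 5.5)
Your proof is correct and is essentially the paper's argument: the paper cites this theorem rather than proving it, but its proof of the zero-set analogue, Theorem \ref{HL16}, follows your structure. It decomposes preimages of short open intervals into countably many pieces for (1)$\Rightarrow$(2) and takes a union of small-diameter pieces for (2)$\Rightarrow$(1), and Remark \ref{lub} likewise observes that metrizability of $X$ is not needed. The only cosmetic difference is that in (2)$\Rightarrow$(1) the paper simply takes all pieces $X_{k,n}$ with $f(X_{k,n})\subset U$, which avoids your margin sets $V_n$ and reaches the same conclusion.
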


\begin{remark} \label{lub}
It is easy to verify that Theorem \ref{HL15} works for any topological space $X$.
\end{remark}

We recall that a subset of $X$ that is the preimage of zero for a
certain function from $C(X)$ is called a zero-set. A subset $O \subset X$ is called
a cozero-set (or functionally open) of $X$ if $X \setminus O$ is a zero-set of $X$. The zero-sets are preserved by finite unions and countable
intersections. Hence cozero-sets are preserved by finite intersections and
countable unions. Countable unions of zero-
sets will be denoted by $Zer_\sigma$.

It is easy to check that $Zer_\sigma$-sets are preserved by countable unions and finite
intersections.
It is well
known that $f$ is of the first Baire class if and only if $f^{-1}(U)$ is $Zer_\sigma$-set for every
open $U \subset \Bbb R$ (see Exercise 3.A.1 in \cite{LMZ}).

\bigskip

We have the following analog of Lebesgue's theorem.

\begin{theorem} \label{HL16}
	Let $X$ be a Tychonoff space. For $f\colon X \to \Bbb R$ the following conditions are equivalent:
	\begin{itemize}
		\item[(1)] A function $f$ is Baire 1;
		\item[(2)] For each $\varepsilon >0$ there is a cover $(X_i)_{i \in \omega}$ of $X$ consisting of zero-sets such that $\text{diam}f(X_i) \le\varepsilon$, for all $i \in \omega$.
	\end{itemize}
	\end{theorem}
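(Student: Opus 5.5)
The plan is to prove both implications using the characterization recalled above: $f$ is Baire 1 if and only if $f^{-1}(U)$ is a $Zer_\sigma$-set for every open $U\subset\Bbb R$ (Exercise 3.A.1 in \cite{LMZ}). This lets us mimic Lebesgue's argument from Theorem \ref{HL15}, with closed sets replaced by zero-sets and $F_\sigma$ sets replaced by $Zer_\sigma$-sets.

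(1)$\Rightarrow$(2). Let $\varepsilon>0$ and cover $\Bbb R$ by the countably many open intervals $I_n=(n\varepsilon/2,\, n\varepsilon/2+\varepsilon)$, $n\in\Bbb Z$, each of length $\varepsilon$. Since $f$ is Baire 1, each $f^{-1}(I_n)$ is a $Zer_\sigma$-set, so $f^{-1}(I_n)=\bigcup_k Z_{n,k}$ with each $Z_{n,k}$ a zero-set. The family $\{Z_{n,k}\}_{n,k}$ is countable and covers $X$. Moreover, $f(Z_{n,k})\subset I_n$, so $\operatorname{diam} f(Z_{n,k})\le\varepsilon$. Re-indexing by $\omega$ gives (2).

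(2)$\Rightarrow$(1). Let $U\subset\Bbb R$ be open; it suffices to show that $f^{-1}(U)$ is $Zer_\sigma$. For each $m\ge1$, take a cover $(X^m_i)_{i\in\omega}$ by zero-sets with $\operatorname{diam} f(X^m_i)\le 1/m$. Put $U_m=\{y\in U: \operatorname{dist}(y,\Bbb R\setminus U)>1/m\}$; if $U=\Bbb R$ the claim is trivial. The claim is
\[
f^{-1}(U)=\bigcup_{m\ge1}\ \bigcup\{X^m_i : f(X^m_i)\cap U_m\neq\emptyset\}.
\]
For "$\supseteq$": if $x\in X^m_i$ and some $f(x')\in U_m$ with $x'\in X^m_i$, then $|f(x)-f(x')|\le 1/m$, so $f(x)\in U$. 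For "$\subseteq$": if $f(x)\in U$, choose $m$ with $f(x)\in U_m$ and $i$ with $x\in X^m_i$; then $f(X^m_i)$ meets $U_m$. The right-hand side is a countable union of zero-sets, hence $Zer_\sigma$, so $f$ is Baire 1.

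The only nontrivial input is the cited equivalence between Baire 1 and $Zer_\sigma$-measurability in Tychonoff spaces. Both directions then reduce to the bookkeeping above; the one point needing care is the shrinking $U_m$, which absorbs the $1/m$ oscillation on each piece.
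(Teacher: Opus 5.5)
Your proof is correct and follows the paper's argument. In (1)$\Rightarrow$(2) you pull back a countable cover of $\Bbb R$ by short open intervals and split each $Zer_\sigma$ preimage into zero-sets, and in (2)$\Rightarrow$(1) you write $f^{-1}(U)$ as a countable union of pieces from the covers for $\varepsilon = 1/m$. The only cosmetic difference is that the paper selects the pieces with $f(X_{k,n})\subset U$ directly, using covers with $\text{diam} f(X_{k,n})<1/(k+1)$. You instead select pieces meeting the shrunken set $U_m$, which handles the non-strict inequality $\text{diam} f(X_i)\le\varepsilon$ just as well.
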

\begin{proof} (1) $\Rightarrow$ (2) Let $f$ be Baire 1.
For a fixed $\varepsilon >0$ we can find open intervals $B_0, B_1, ,B_2, \dots$ in $\Bbb R$ such that
$\Bbb R = \bigcup_{n\in\omega} B_n$ and the lenght of $B_n$, $|B_n| \le\varepsilon$ for all $n\in\omega$.
For $n\in\omega$, the set $f^{-1}(B_n)$ is $Zer_\sigma$-set, so it can be expressed as the union $\bigcup_{j\in\omega}F_{n,j}$ of zero-sets $F_{n,j}$.
Enumerate $\{F_{n,j}: n,j\in\omega\}$ as $\{X_i: i\in\omega\}$ and observe that the respective condition in (2) holds.

(2) $\Rightarrow$ (1) By (2), for each $k\in\omega$ we find zero-sets $X_{k,n}$, $n\in \omega$
such that $X=\bigcup_{n\in\omega}X_{k,n}$ and $\text{diam}f(X_{k,n})<1/(k+1)$ for all $n\in \omega$.
Let $U$ be an open set in $\Bbb R$. Define $\mathcal F=\{X_{k,n}: f(X_{k,n})\subset U\}$.
It is enough to show that $f^{-1}(U)=\bigcup\mathcal F$ since then $f^{-1}(U)$ is $Zer_\sigma$-set.

Firstly, if $x\in \bigcup\mathcal F$, then $x\in X_{k,n}$ for some $k,n\in \omega$ with $f(X_{k,n})\subset U$. Hence $f(x)\in U$ and so, $x\in f^{-1}(U)$.
Secondly, if $f(x)\in U$, we can find $k\in \omega$ such that the set $\{y\in \Bbb R: |f(x) - y|<1/(k+1)\}$ is contained
in $U$. Find $n\in \omega$ such that $x\in X_{k,n}$. Since $\text{diam}f(X_{k,n})<1/(k+1)$, we have $f(X_{k,n})\subset U$. Hence $x\in\mathcal F$ as desired.
\end{proof}

\bigskip

By $K(X)$ we denote the family of all nonempty compact subsets of $X$.
Denote by $\tau_{UC}$ the topology of uniform convergence on
compact sets on $\Bbb R^X$. This topology is induced by the
uniformity $\frak U_{UC}$ which has a base consisting of sets of the
form
$$W(K,\varepsilon )=\{(f ,g):\ \forall\ x\in K\ \ |f(x) - g(x)|<
\varepsilon \},$$
where $\noindent K\in K(X)$ and $\varepsilon >0$. The general
$\tau_{UC}$-basic neighborhood of $f\in \Bbb R^X$ will be denoted by
$W(f,K,\varepsilon )$, where
$$W(f,K,\varepsilon ) =\{g:\ \forall\ x\in K\ \ |f(x) - g(x)|<
\varepsilon \}.$$

In this paper, we are mainly interested in completeness properties. A topological space $X$ is \emph{Čech-complete} \cite{En}, if $X$ is a Tychonoff space and it is a $G_\delta$ set in one (equivalently, in all) of its compactifications. Each completely metrizable space is Čech-complete. A Tychonoff
topological space $X$ is called \emph{locally Čech-complete} \cite{En} if every point $x \in X$ has a
\v Cech-complete neighbourhood.

A set $A\subseteq X$ is of \emph{first Baire category}, if it is a countable union of nowhere dense sets; otherwise it is of \emph{second Baire category}. A space $X$ is called \emph{Baire} space, if every nonempty open subset of $X$ is of second Baire category. There is also a characterization of Baire spaces via topological games.

The Choquet game $G(X)$ of a topological space $X$ \cite{Kech} is a game between two players $\alpha$ and $\beta$. The player $\beta$ starts a play by selection a nonempty open subset $U_0$ of $X$. Then $\alpha $ chooses a nonempty open subset $V_0$ of $U_0$. In $n$th move, the player $\beta$ picks a nonempty open set $U_{n}
\subset V_{n-1}$, where $V_{n-1}$ is the previous move of $\alpha$-player, and $\alpha$ answers by selecting a nonempty open set $V_n \subset U_n$.
The player $\alpha$ wins the play $(U_i, V_i)_{i \in \omega}$, if $\bigcap_{n \in \omega} V_n \neq \emptyset$. Otherwise the player $\beta $ is said to have won the play. \\
We say that the player $\alpha$ has a winning strategy for the game $G(X)$ if there exists a strategy $s$, such that $\alpha $ wins all plays provided that he/she acts according to the strategy $s$. In this case, we say that $X$ is a \emph{Choquet space}.

The strong Choquet game $G^s(X)$ of a topological space $X$ \cite{Kech} is a game between two players $\alpha$ and $\beta$ similar to $G(X)$. The player $\beta$ starts a play by selection a pair $(x_0,U_0)$, where $U_0$ is an open subset of $X$ and $x_0 \in U_0$. Then $\alpha $ must play a nonempty open subset $V_0$ of $U_0$ with $x_0 \in U_0$.

In $n$th move, the player $\beta$ picks a pair $(x_n,U_n)$, where $U_n$ is an open subset of $V_{n-1}$, the previous move of $\alpha$-player, and $x_n \in U_n$. Then $\alpha$ answers by selecting an open set $V_n \subset U_n$ with $x_n \in V_n$.
The player $\alpha$ wins the play $(U_i, V_i)_{i \in \omega} $, if $\bigcap_{n \in \omega} V_n \neq \emptyset$. Otherwise the player $\beta $ is said to have won the play. \\
We say that the player $\alpha$ has a winning strategy for the game $G^s(X)$ if there exists a strategy $s$, such that $\alpha $ wins all plays provided that he/she acts according to the strategy $s$. In this case, we say that $X$ is a \emph{strong Choquet space}.

\bigskip

A topological space $X$ is hemicompact \cite{En} if in the family of all compact subspaces of $X$ ordered by inclusion there exists a countable cofinal subfamily. Every hemicompact space is $\sigma$-compact, but not vice versa. The space of rationals with the usual topology is a $\sigma$-compact space which is not hemicompact.

A topological space $X$ is a $q$-space, if for each point there exists a sequence $\{U_n:\ n\in \omega\}$ of neighbourhoods of that point so that if $x_n\in U_n$ for each $n$, then $\{x_n:\ n\in \omega \}$ has a cluster point \cite{MN}. The following result was proved in \cite{HH}.

\begin{corollary} \label{completeness}
Let $X$ be a Tychonoff space. The following are equivalent:
\begin{enumerate}

\item $(B_1(X)),\tau_{UC})$ is completely metrizable;
\item $(B_1(X),\tau_{UC})$ is \v Cech complete;
\item $(B_1(X),\tau_{UC})$ is a $q$-space;
\item $X$ is hemicompact.
\end{enumerate}
\end{corollary}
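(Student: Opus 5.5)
The implications (1) $\Rightarrow$ (2) $\Rightarrow$ (3) hold for all Tychonoff spaces. A completely metrizable space is \v Cech-complete, and a \v Cech-complete space is a $q$-space: fix a compactification in which the space is the intersection of open sets $G_n$, and take neighbourhoods of the point whose closures in the compactification lie in $G_n$ and decrease. So the real content is (3) $\Rightarrow$ (4) and (4) $\Rightarrow$ (1).

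For (4) $\Rightarrow$ (1), let $\{K_n\}$ be a countable cofinal family in $K(X)$, increasing without loss of generality. Then $\tau_{UC}$ on $\Bbb R^X$ is generated by the countable family of pseudometrics $\sup_{x\in K_n}\min(|f(x)-g(x)|,1)$. These separate points because every singleton is compact, hence contained in some $K_n$. So they combine into a single metric $d(f,g)=\sum_n 2^{-n}\sup_{K_n}\min(|f-g|,1)$, and $(\Bbb R^X,d)$ is complete: a $d$-Cauchy sequence converges uniformly on each $K_n$, hence pointwise, and the pointwise limit is its $d$-limit.

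It then suffices to show that $B_1(X)$ is closed in $(\Bbb R^X,\tau_{UC})$. I expect this to be the main obstacle, because uniform limits on compacta need not be global uniform limits. I would use Theorem \ref{HL16}. Let $f$ be in the closure of $B_1(X)$ and fix $\varepsilon>0$. For each $n$ pick $g_n\in B_1(X)$ with $|g_n-f|<\varepsilon/3$ on $K_n$. Take a zero-set cover $(Z_{n,i})_i$ of $X$ with $\text{diam}\, g_n(Z_{n,i})\le\varepsilon/3$. The trouble is that $K_n$ itself need not be a zero-set. I would avoid this by exploiting $\tau_{UC}$-closedness more cleverly: since every point lies in some $K_n$, the sets $Z_{n,i}\cap K_n$ cover $X$ and $f$ oscillates at most $\varepsilon$ on each. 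By Remark \ref{lub} this still needs closed pieces, and compact sets are closed, so $f$ is $F_\sigma$-measurable. To reach Baire 1, I would check whether a hemicompact Tychonoff space is normal; this holds for hemicompact $k$-spaces but not in general. Failing that, I would replace $K_n$ by zero-sets $L_n\supseteq K_n$ on which $g_n$ still approximates $f$. Concretely, the set $\{x: |g_n(x)-f(x)|\le\varepsilon/3\}$ is not yet known to be nice, so I would instead approximate $f$ by continuous-type data. This step needs the most care.

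For (3) $\Rightarrow$ (4), suppose the zero function has neighbourhoods $W(0,C_n,\varepsilon_n)$ witnessing the $q$-property. If $\{C_n\}$ is not cofinal, choose for each $n$ a compact $K$ with $K\not\subseteq C_n$. The aim is a $g_n\in B_1(X)$ vanishing on $C_n$ but equal to $n$ at some point $x_n\notin C_n$, chosen so that no subnet of $(g_n)$ converges uniformly on compacta. The natural choice is $g_n=n\cdot\chi_{\{x_n\}}$ if points are zero-sets; otherwise use a continuous Urysohn-type function separating $x_n$ from $C_n$. Such functions are Baire 1, so any cluster point would be bounded on a compact set containing infinitely many $x_n$ that violates the growth, giving a contradiction. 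The delicate point is picking the $x_n$ so that they accumulate in a single compact set.
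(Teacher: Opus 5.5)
The paper gives no proof of this corollary; it quotes it from \cite{HH}. So I am judging your outline on its own. The implications (1)$\Rightarrow$(2)$\Rightarrow$(3) are fine, and so are the metric $d$ and the completeness of $(\Bbb R^X,d)$ in (4)$\Rightarrow$(1). However, each of the two substantive steps has a genuine gap as written.

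\textbf{The step (4)$\Rightarrow$(1).} You correctly show that every $f$ in the $\tau_{UC}$-closure of $B_1(X)$ has, for each $\varepsilon$, the countable closed cover $(Z_{n,i}\cap K_n)_{n,i}$ with oscillation at most $\varepsilon$. Hence $f$ is $F_\sigma$-measurable by Remark \ref{lub}. You then stop, asserting that a hemicompact Tychonoff space need not be normal. That assertion is false. A hemicompact space is $\sigma$-compact, hence Lindel\"of, and a regular Lindel\"of space is normal; the paper uses exactly this in the proof of Theorem \ref{compact}. With normality, Proposition \ref{la} gives $f\in B_1(X)$. So $B_1(X)$ is closed in the complete metric space $(\Bbb R^X,d)$, and the implication is finished. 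The detour through zero-sets $L_n\supseteq K_n$ is unnecessary.

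\textbf{The step (3)$\Rightarrow$(4).} Your concluding step fails, because a Baire 1 function need not be bounded on a compact set. For example, on $[0,1]$ the function $g(x)=1/x$ for $x>0$, $g(0)=0$ is Baire 1. So ``any cluster point would be bounded on a compact set'' yields no contradiction. This is precisely where $B_1(X)$ differs from $C(X)$.

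The point you call delicate is, by contrast, trivial. Saying $\{C_n\}$ is not cofinal means there is a single compact $K$ with $K\not\subseteq C_n$ for every $n$. Take $x_n\in K\setminus C_n$, choose $h_n\in C(X,[0,1])$ with $h_n(x_n)=1$ and $h_n|_{C_n}=0$, and set $g_n=nh_n$, so that $g_n\in W(0,C_n,\varepsilon_n)$.

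The contradiction should come from pairwise separation on $K$, not from boundedness. For $n<m$ we have $g_m(x_m)-g_n(x_m)=m-nh_n(x_m)\ge m-n\ge 1$. A cluster point $g\in B_1(X)$ would put two distinct $g_n,g_m$ into $W(g,K,1/2)$, forcing $|g_m(x_m)-g_n(x_m)|<1$. This uses only that $g$ is real-valued.
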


\bigskip

Now we show that every regular locally \v Cech complete space is a $q$-space. We will use an internal characterization of \v Cech-complete spaces.

\begin{theorem} (\cite{En}) \label{cech}
A Tychonoff space $X$ is \v Cech-complete if and only if there is a sequence $\{\mathcal G_n: n \in \omega\}$ of open covers of $X$ with the property that any family $\mathcal F$ of closed subsets of $X$, which has the finite intersection property and for every $n \in \omega$ there are $F \in \mathcal F$ and $G \in \mathcal G_n$ with $F \subset G$, has non-empty intersection.
\end{theorem}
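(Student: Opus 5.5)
The approach is to prove both directions by comparing $X$ with its Čech--Stone compactification $\beta X$. By definition, $X$ is Čech-complete exactly when $X$ is a $G_\delta$ subset of $\beta X$. Throughout, I use three facts: $\beta X$ is compact Hausdorff, hence regular; $X$ is dense in $\beta X$; and a closed set $F\subset X$ satisfies $\overline{F}^{\beta X}\cap X=F$.

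\emph{($\Rightarrow$)} Write $X=\bigcap_{n\in\omega}W_n$ with each $W_n$ open in $\beta X$.
\begin{enumerate}
\item For each $x\in X$ and $n$, regularity of $\beta X$ gives an open $V_{x,n}\subset\beta X$ with $x\in V_{x,n}$ and $\overline{V_{x,n}}^{\beta X}\subset W_n$.
\item Put $\mathcal G_n=\{V_{x,n}\cap X: x\in X\}$; this is an open cover of $X$.
\item Let $\mathcal F$ be a family of closed subsets of $X$ with the finite intersection property, meeting the hypothesis for every $n$. The closures $\{\overline F^{\beta X}:F\in\mathcal F\}$ also have the finite intersection property. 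By compactness of $\beta X$ there is a point $p$ in all of them.
\item For each $n$ pick $F\in\mathcal F$ and $x$ with $F\subset V_{x,n}\cap X$. Then $p\in\overline F^{\beta X}\subset\overline{V_{x,n}}^{\beta X}\subset W_n$.
\item Hence $p\in\bigcap_n W_n=X$. For every $F\in\mathcal F$ we get $p\in\overline F^{\beta X}\cap X=F$, so $\bigcap\mathcal F\neq\emptyset$.
\end{enumerate}

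\emph{($\Leftarrow$)} Let $\{\mathcal G_n:n\in\omega\}$ be the given covers.
\begin{enumerate}
\item For each $G\in\mathcal G_n$ choose an open $\widehat G\subset\beta X$ with $\widehat G\cap X=G$.
\item Let $W_n=\bigcup\{\widehat G: G\in\mathcal G_n\}$. Then $X\subset\bigcap_n W_n$, and it remains to show $\bigcap_n W_n\subset X$.
\item Fix $p\in\bigcap_n W_n$ and let $\mathcal F=\{\overline{U\cap X}^{X}: U \text{ an open neighbourhood of } p \text{ in } \beta X\}$.
\item $\mathcal F$ consists of closed subsets of $X$. It has the finite intersection property: a finite intersection of neighbourhoods $U$ of $p$ is again a neighbourhood of $p$, and it meets $X$ because $X$ is dense.
\item For each $n$, pick $G\in\mathcal G_n$ with $p\in\widehat G$. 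By regularity choose $U\ni p$ open with $\overline U^{\beta X}\subset\widehat G$. Then $\overline{U\cap X}^X\subset\overline U^{\beta X}\cap X\subset\widehat G\cap X=G$, so the hypothesis applies.
\item Hence there is $x\in\bigcap\mathcal F$, with $x\in X$.
\item Suppose $x\neq p$. The Hausdorff property and regularity of $\beta X$ give an open $U\ni p$ with $x\notin\overline U^{\beta X}$. Then $x\notin\overline{U\cap X}^X\in\mathcal F$, a contradiction.
\item So $p=x\in X$, and $X=\bigcap_n W_n$ is a $G_\delta$ subset of $\beta X$.
\end{enumerate}

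The main obstacle is the refinement step (5) of the reverse direction. Taking $\overline{U\cap X}^X$ for arbitrary small $U\subset\widehat G$ need not land inside $G$, because closures can leak out of $G$. This is repaired by first shrinking $U$ so that its $\beta X$-closure lies inside $\widehat G$, which uses regularity of $\beta X$. The final step (7) uses regularity in the same way.
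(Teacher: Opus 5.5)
Your proof is correct, and it is essentially the standard argument in Engelking, which the paper only cites without reproducing. For ($\Rightarrow$) you shrink neighbourhoods so that their $\beta X$-closures lie inside the open sets $W_n$ with $\bigcap_n W_n = X$, and for ($\Leftarrow$) you extend the covers to open sets of $\beta X$ and test a point $p\in\bigcap_n W_n$ against the traces on $X$ of the closures of its neighbourhoods, which is the same approach.
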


\begin{proposition} \label{Q}
Let $X$ be a regular locally \v Cech complete space. Then $X$ is a $q$-space.
\end{proposition}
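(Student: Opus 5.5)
Fix $x \in X$ and let $H$ be a \v Cech-complete neighbourhood of $x$. By regularity of $X$ we first choose an open set $O$ with $x \in O \subset \overline O \subset \operatorname{Int} H$. The plan is to produce the required sequence of neighbourhoods of $x$ from the sequence of open covers $\{\mathcal G_n : n\in\omega\}$ of $H$ given by Theorem \ref{cech}.

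For each $n$, pick $G_n \in \mathcal G_n$ with $x \in G_n$. Each $G_n$ is open in $H$, so $G_n \cap \operatorname{Int} H$ is open in $X$. Using regularity again, choose open sets $U_n \subset X$ recursively so that
$$x \in U_n,\qquad \overline{U_n} \subset G_n \cap O \cap \operatorname{Int} H, \qquad U_{n+1} \subset U_n .$$
Here the closures in $X$ and in $H$ agree, since $\overline{U_n} \subset \overline O \subset H$. These $U_n$ are the neighbourhoods witnessing the $q$-space property at $x$.

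Now let $x_n \in U_n$ for every $n$, and put $F_m = \overline{\{x_k : k \ge m\}}$ (closure in $X$). Since $x_k \in U_k \subset U_m$ for $k \ge m$, we get $F_m \subset \overline{U_m} \subset G_m$. In particular each $F_m$ is a closed subset of $H$. The family $\mathcal F = \{F_m : m\in\omega\}$ is decreasing and consists of nonempty sets, so it has the finite intersection property. For every $n$ we have $F_n \subset G_n \in \mathcal G_n$. Theorem \ref{cech} therefore gives a point $y \in \bigcap_m F_m$.

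Such a $y$ lies in the closure of every tail $\{x_k : k\ge m\}$, so every neighbourhood of $y$ contains $x_k$ for infinitely many $k$. Hence $y$ is a cluster point of $\{x_n : n\in\omega\}$, and $X$ is a $q$-space. The only delicate point is ensuring that the sets $F_m$ are closed in $H$ and lie inside members of $\mathcal G_m$. This is handled by shrinking with regularity so that all closures stay inside $\operatorname{Int} H$, which makes closure in $X$ and closure in $H$ coincide.
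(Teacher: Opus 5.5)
Your proof is correct and takes essentially the same route as the paper. Both pick $G_n\in\mathcal G_n$ containing $x$, use regularity to get nested open neighbourhoods $U_n$ with $\overline{U_n}\subset G_n$, and apply Theorem \ref{cech} to the closures of the tails $\{x_k : k\ge m\}$. The only difference is minor: the paper assumes without loss of generality that the \v Cech-complete neighbourhood is open (implicitly using that open subspaces of \v Cech-complete spaces are \v Cech-complete), whereas you work with an arbitrary neighbourhood $H$ and keep all closures inside $\operatorname{Int} H$, which avoids needing that fact.
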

\begin{proof}
Let $x \in X$. There is a neighbourhood $V$ of $x$ which is a \v Cech-complete space with the induced topology from $X$. Without loss of generality we can suppose that $V$ is an open set in $X$. By Theorem \ref{cech} there is a sequence $\{\mathcal G_n: n \in \omega\}$ of open covers of $V$ with the property that any family $\mathcal F$ of closed subsets of $V$, which has the finite intersection property and for every $n \in \omega$ there are $F \in \mathcal F$ and $G \in \mathcal G_n$ such that $F \subset G$, has non-empty intersection. For every $n \in \omega$ let $G_n \in \mathcal G_n$ be such that $x \in G_n$. There is an open set $V_n$ in $X$ such that $x \in V_n \subset \overline{V_n} \subset G_n$. For every $n \in \omega$ put $U_n = \cap_{i \le n} V_i$. We claim that the sequence $\{U_n: n \in \omega\}$ is a sequence of open neighbourhoods of $x$ with the following property: if $x_n\in U_n$ for each $n$, then $\{x_n:\ n\in \omega \}$ has a cluster point. For every $m \in \omega$ put $F_m = \overline{\{x_k: k \ge m\}}$. Then $F_m$ is a closed subset of $G_m$. By the assumption the family $\{F_m: m \in \omega\}$ has a non-empty intersection. Thus the sequence $\{x_n: n \in \omega\}$ has a cluster point \cite{Ke}.
\end{proof}

\begin{corollary} \label{hemicompact}
Let $X$ be a Tychonoff space. The following are equivalent:
\begin{enumerate}
\item $(B_1(X)),\tau_{UC})$ is completely metrizable;
\item $(B_1(X),\tau_{UC})$ is \v Cech complete;
\item $(B_1(X),\tau_{UC})$ is locally \v Cech complete;
\item $(B_1(X),\tau_{UC})$ is a $q$-space;
\item $X$ is hemicompact.
\end{enumerate}
\end{corollary}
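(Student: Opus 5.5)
The plan is to obtain this corollary by combining Corollary \ref{completeness} with Proposition \ref{Q}. Corollary \ref{completeness} already gives the equivalence of (1), (2), (4) and (5), so it remains to fit condition (3) into the chain. I will prove (2) $\Rightarrow$ (3) $\Rightarrow$ (4).

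The implication (2) $\Rightarrow$ (3) is immediate. If $(B_1(X),\tau_{UC})$ is \v Cech complete, then every point has the whole space as a \v Cech-complete neighbourhood.

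For (3) $\Rightarrow$ (4), I will apply Proposition \ref{Q}. This requires $(B_1(X),\tau_{UC})$ to be regular, and indeed Tychonoff, since local \v Cech completeness is defined only for Tychonoff spaces. This regularity check is the only point needing attention. The topology $\tau_{UC}$ on $\Bbb R^X$ is induced by the uniformity $\frak U_{UC}$. A uniform space topology is completely regular, and it is Hausdorff because singletons are compact: if $f\ne g$, pick $x$ with $f(x)\ne g(x)$ and use $W(\{x\},\varepsilon)$. Restricting to the subspace $B_1(X)$ preserves these properties, so $(B_1(X),\tau_{UC})$ is Tychonoff and in particular regular.

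With regularity in hand, Proposition \ref{Q} shows that a locally \v Cech complete $(B_1(X),\tau_{UC})$ is a $q$-space, which is (4). Then (4) $\Rightarrow$ (5) $\Rightarrow$ (1) $\Rightarrow$ (2) hold by Corollary \ref{completeness}, and this closes the cycle. I expect no real obstacle beyond noting the Tychonoff property of the uniform topology.
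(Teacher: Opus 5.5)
Your proposal is correct and is essentially the paper's argument. The paper gives no separate proof: it places the corollary right after Proposition~\ref{Q}, so that Corollary~\ref{completeness}, the trivial implication (2)~$\Rightarrow$~(3), and Proposition~\ref{Q} for (3)~$\Rightarrow$~(4) close the cycle. Your explicit check that $(B_1(X),\tau_{UC})$ is Tychonoff (a separated uniform topology, separated because singletons are compact) supplies the regularity hypothesis that the paper leaves tacit.
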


\bigskip

\bigskip

\section{On strong Choquet property}

\bigskip

In this section we prove that if $X$ is locally compact or a topological sum of $\sigma$-compact spaces, the space $B_1(X)$ equipped with the topology of uniform convergence on compacta, is a strong Choquet space.

\bigskip

\begin{lemma} \label{Alexandroff}
Let $X$ be a locally compact Tychonoff space. Let $C$ and $K$ be two compact sets in $X$ such that $C \subset$ Int$K$. Then there is a zero-set $D$ such that $C \subset D \subset$ Int$K$.
\end{lemma}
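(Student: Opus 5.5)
The plan is to build a continuous function on the compact Hausdorff space $K$ that separates $C$ from the boundary of $K$, and then extend it by a constant to all of $X$.

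First I separate $C$ from the complement of $\text{Int}K$. Since $X$ is Tychonoff and $C$ is compact with $C\subset \text{Int}K$, for each $x\in C$ there is a continuous $g_x\colon X\to[0,1]$ with $g_x(x)=1$ and $g_x=0$ on $X\setminus \text{Int}K$. By compactness, finitely many of the open sets $\{g_x>1/2\}$ cover $C$. Put $g=\min(1,2\max_i g_{x_i})$. Then $g$ is continuous, $g=1$ on $C$, and $g=0$ outside $\text{Int}K$. Alternatively, one can use that $K$ is compact Hausdorff, hence normal, and apply Urysohn's lemma in $K$ to the disjoint closed sets $C$ and $K\setminus \text{Int}K$.

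Next I adjust $g$ so that its zero-set sits strictly inside $\text{Int}K$ rather than merely inside $K$. Take $h=\max(0,\,1/2-g)$, which is continuous on $X$, and set $D=h^{-1}(0)=\{x: g(x)\ge 1/2\}$. Then $D$ is a zero-set and $C\subset D$. Moreover, if $x\notin \text{Int}K$ then $g(x)=0<1/2$, so $x\notin D$. Hence $D\subset \text{Int}K$.

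The only point needing care is whether we need continuity on all of $X$ rather than just on $K$. This is why I prefer the first route, which works globally via complete regularity and a compactness argument, over Urysohn's lemma inside $K$. The Urysohn route would require gluing the function on $K$ with $0$ outside $K$, and that gluing is continuous only because the function vanishes on $K\setminus\text{Int}K$ (by the pasting lemma for the closed sets $K$ and $\overline{X\setminus K}$). Local compactness is not really needed beyond what the hypotheses already give.
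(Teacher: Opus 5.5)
Your proof is correct, but it takes a different route from the paper. The paper passes to the Alexandroff one-point compactification $X^*$. Because $X$ is locally compact, $X^*$ is compact Hausdorff and hence normal. The paper then applies Urysohn/Tietze in $X^*$ to the disjoint closed sets $C$ and $X^*\setminus\text{Int}K$, obtaining $f$ with $f=0$ on $C$ and $f=1$ off $\text{Int}K$. It takes $D=f^{-1}(0)$, which avoids $\infty$ and so equals the zero set of $f\upharpoonright_X$.

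You instead stay inside $X$ and use only complete regularity plus the compactness of $C$, via the finite-max trick. Your route buys generality. It uses neither local compactness nor compactness of $K$, and it shows that in any Tychonoff space a compact set contained in an open set $U$ lies in a zero set contained in $U$. The paper's route buys brevity and uniformity with Lemma~\ref{Alexandroff1}, where the same compactification is used to extend continuous functions from $C\cup(X\setminus\text{Int}K)$.

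One small point: your adjustment step is unnecessary. The set $g^{-1}(1)$, the zero set of $1-g$, already satisfies $C\subset g^{-1}(1)\subset\text{Int}K$. Your description of that step, as moving the zero set ``inside $\text{Int}K$ rather than merely inside $K$,'' does not match what the step actually does, although the resulting $D=\{g\ge 1/2\}$ is correct.
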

\begin{proof}
Let $X^*$ be the Alexandroff extension of $X$. Since $X^*$ is a compact Hausdorff space, it is normal. By Tietze extension theorem there is a continuous function $f: X^* \to \Bbb R$ such that $f(x) = 0$ for every $x \in C$ and $f(x) = 1$ for every $x \notin $ Int$K$. Thus $C \subset f^{-1}(\{0\}) \subset $ Int$K$. Since $\{x \in X^*: f(x) = 0\} = \{x \in X: f\upharpoonright_{X}(x) = 0\}$, we are done.
\end{proof}

\begin{lemma} \label{Alexandroff1}
Let $X$ be a locally compact Tychonoff space, $C$ and $K$ be two compact sets in $X$ such that $C \subset$ Int$K$ and $a \in \Bbb R$. Put $L = C \cup (X \setminus$ Int$K)$. If $f: L \to \Bbb R$ is a continuous function, such that $f(x) = a
$ for every $x \in X \setminus$ Int$K$, then there is a continuous extension $f^*: X \to \Bbb R$ of $f$.
\end{lemma}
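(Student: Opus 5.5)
Following the proof of Lemma \ref{Alexandroff}, I would pass to the Alexandroff extension $X^* = X \cup \{\infty\}$ and use Tietze there. Put $L^* = L \cup \{\infty\}$ and define $g: L^* \to \Bbb R$ by $g = f$ on $L$ and $g(\infty) = a$. Once I know that $L^*$ is closed in $X^*$ and that $g$ is continuous, the Tietze extension theorem (valid since $X^*$ is compact Hausdorff, hence normal) yields a continuous $G: X^* \to \Bbb R$ extending $g$. Then $f^* = G\upharpoonright_X$ is a continuous extension of $f$.

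Closedness of $L^*$ is immediate. Its complement in $X^*$ is $\text{Int}K \setminus C$. This set is open in $X$, since $C$ is compact and hence closed in the Hausdorff space $X$. It does not contain $\infty$, so it is open in $X^*$.

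Continuity of $g$ is the main point, and I would handle it by splitting $L^*$ into two closed pieces and applying the pasting lemma. The first piece is $C$, which is compact in $X$ and therefore compact and closed in $X^*$; on it $g = f$ is continuous. The second piece is $M = (X \setminus \text{Int}K) \cup \{\infty\}$, which equals $X^* \setminus \text{Int}K$. Since $\text{Int}K$ is open in $X$ and hence in $X^*$, the set $M$ is closed in $X^*$. On $M$ the function $g$ is constantly $a$, so it is continuous. The two pieces cover $L^*$, and $g$ is continuous on each, so $g$ is continuous on $L^*$.

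The only subtlety is whether $f$ is consistent on $C \cap (X \setminus \text{Int}K)$, and this set is empty because $C \subset \text{Int}K$. So the obstacle is really just checking that $\infty$ causes no trouble. This is exactly why I treat $M$ as a single closed set on which $g$ is constant, rather than arguing about neighbourhoods of $\infty$ directly.
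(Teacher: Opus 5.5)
Your proof is correct and is the same argument as the paper's: pass to the Alexandroff extension $X^*$, set $g(\infty)=a$ on $L^*=L\cup\{\infty\}$, apply Tietze in the compact Hausdorff (hence normal) space $X^*$, and restrict to $X$. The paper only asserts that $L^*$ is closed and $g$ is continuous; your checks correctly supply those details, namely that $X^*\setminus L^*=\text{Int}K\setminus C$ and that $g$ is continuous by pasting over the disjoint closed sets $C$ and $X^*\setminus\text{Int}K$.
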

\begin{proof}
Let $X^*$ be the Alexandroff extension of $X$. Put $L^* = L \cup \{\infty\}$ and define $g: L^* \to \Bbb R$ as $g(x) = f(x)$ for $x \in L$ and $g(\infty) = a$. Then $L^*$ is closed in $X^*$ and $g: L^* \to \Bbb R$ is continuous. Since $X^*$ is a compact Hausdorff space, it is normal. By Tietze extension theorem there is a continuous function $g^*: X^* \to \Bbb R$ such that $g^*(x) = g(x)$ for every $x \in L^*$. Then the function $f^* = g^*\upharpoonright_X$ is a continuous extension of $f$.
\end{proof}

\begin{lemma} \label{pointwise}
Let $X$ be a Tychonoff space and $(f_n)_{n \in \omega}$ be a sequence in $B_1(X)$, which pointwise converges to a function $f : X \to \Bbb R$.
If for each $\varepsilon >0$ there is a cover $(X_i)_{i \in \omega}$ of $X$ consisting of zero-sets such that $\text{diam}f_n(X_i) \le\varepsilon$, for all $i, n \in \omega$, then $f$ is Baire 1.
\end{lemma}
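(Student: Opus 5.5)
The plan is to verify condition (2) of Theorem \ref{HL16} for the limit $f$, using the same zero-set covers that work uniformly for all the $f_n$. Fix $\varepsilon>0$ and, by hypothesis, choose a cover $(X_i)_{i\in\omega}$ of $X$ by zero-sets such that $\text{diam} f_n(X_i)\le\varepsilon$ for all $i,n\in\omega$. We will show that the same cover satisfies $\text{diam} f(X_i)\le\varepsilon$ for every $i$. Once this holds for every $\varepsilon>0$, Theorem \ref{HL16} ((2) $\Rightarrow$ (1)) gives that $f$ is Baire 1.

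The key step is passing the diameter bound to the pointwise limit. Fix $i$ and points $x,y\in X_i$. For every $n$ we have $|f_n(x)-f_n(y)|\le \text{diam} f_n(X_i)\le\varepsilon$. Since $f_n(x)\to f(x)$ and $f_n(y)\to f(y)$, letting $n\to\infty$ gives $|f(x)-f(y)|\le\varepsilon$, because non-strict inequalities are preserved under limits. Taking the supremum over $x,y\in X_i$ yields $\text{diam} f(X_i)\le\varepsilon$.

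There is no serious obstacle; the argument amounts to noting that the hypothesis is uniform in $n$ and that the bound is non-strict, so it survives the limit. The one point to watch is that the proof of (2) $\Rightarrow$ (1) in Theorem \ref{HL16} was written with strict inequalities $\text{diam} f(X_{k,n})<1/(k+1)$. This causes no difficulty: apply the hypothesis with $\varepsilon = 1/(k+2)$, say, to obtain covers with $\text{diam} f(X_{k,n})\le 1/(k+2)<1/(k+1)$, and then the proof of Theorem \ref{HL16} applies verbatim.
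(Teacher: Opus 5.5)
Your proposal is correct and follows the paper's proof: both verify condition (2) of Theorem \ref{HL16} for $f$ using the same uniform zero-set cover. The only difference is cosmetic. The paper takes the cover for $\varepsilon/3$ and uses the triangle inequality with a suitably large $n$, whereas you pass directly to the limit in $|f_n(x)-f_n(y)|\le\varepsilon$, which is slightly cleaner; your caveat about strict inequalities is unnecessary, since condition (2) of Theorem \ref{HL16} is stated with $\le$ and can be applied as a black box.
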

\begin{proof}
We will prove $(2)$ from Theorem \ref{HL16}. Let $\varepsilon > 0$. By the assumption there is a cover $(X_i)_{i \in \omega}$ of $X$ consisting of zero-sets such that $\text{diam}f_n(X_i) \le\varepsilon/3$, for all $i, n \in \omega$. We will show that $\text{diam}f(X_i) \le\varepsilon$ for every $i \in \omega$.
Let $i \in \omega$. Let $x, y \in X_i$. There is $n \in \omega$ such that $|f(x) - f_n(x)| < \epsilon/3$ and $|f(y) - f_n(y)| < \epsilon/3$. Thus we have
\bigskip

\centerline{$|f(x) - f(y)| \le |f(x) - f_n(x)| + |f_n(x) - f_n(y)| + |f_n(y) - f(y)| < \epsilon.$}

\bigskip
Then $\text{diam}f(X_i) \le\varepsilon$. By Theorem \ref{HL16} $f$ is Baire 1.
\end{proof}

\begin{lemma} \label{finite}
Let $X$ be a Tychonoff space and $\mathcal F \subset B_1(X)$ be a finite family. Then for each $\varepsilon >0$ there is a cover $(X_i)_{i \in \omega}$ of $X$ consisting of zero-sets such that $\text{diam}f(X_i) \le\varepsilon$, for all $i \in \omega$ and every $f \in \mathcal F$.
\end{lemma}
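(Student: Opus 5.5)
Write $\mathcal F=\{f_1,\dots,f_m\}$ and fix $\varepsilon>0$. The plan is to apply Theorem \ref{HL16} to each $f_k$ separately and then take common refinements. Since each $f_k$ is Baire 1, the implication (1) $\Rightarrow$ (2) of Theorem \ref{HL16} gives, for each $k\le m$, a cover $(X^k_i)_{i\in\omega}$ of $X$ by zero-sets with $\text{diam} f_k(X^k_i)\le\varepsilon$ for all $i\in\omega$.

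Next, for every multi-index $(i_1,\dots,i_m)\in\omega^m$, I would put
$$Z_{i_1,\dots,i_m}=X^1_{i_1}\cap X^2_{i_2}\cap\dots\cap X^m_{i_m}.$$
As recalled in the preliminaries, zero-sets are closed under finite (indeed countable) intersections, so each $Z_{i_1,\dots,i_m}$ is a zero-set. These sets cover $X$: for $x\in X$ and each $k$ choose $i_k$ with $x\in X^k_{i_k}$, and then $x\in Z_{i_1,\dots,i_m}$. Moreover, $Z_{i_1,\dots,i_m}\subset X^k_{i_k}$ gives $\text{diam} f_k(Z_{i_1,\dots,i_m})\le\text{diam} f_k(X^k_{i_k})\le\varepsilon$ for every $k\le m$.

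Finally, since $\omega^m$ is countable, I would enumerate the family $\{Z_{i_1,\dots,i_m}\}$ as $(X_i)_{i\in\omega}$, repeating terms if needed. Empty sets are zero-sets and have diameter $0$, so they cause no trouble. If $\mathcal F=\emptyset$, the trivial cover $X_i=X$ works. No step presents a real obstacle. The only points to check are closure of zero-sets under finite intersections and countability of $\omega^m$, both of which are immediate.
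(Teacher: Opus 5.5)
Your proof is correct and takes essentially the paper's approach. The paper's one-line proof cites closure of $Zer_\sigma$-sets under finite intersections, i.e.\ it refines the preimages $f_1^{-1}(B_{n_1})\cap\cdots\cap f_m^{-1}(B_{n_m})$ of short intervals, whereas you intersect the zero-set covers from Theorem \ref{HL16} directly; this is the same common-refinement idea, and you spell it out in more detail.
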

\begin{proof} $Zer_\sigma$-sets are preserved by finite
intersections.
\end{proof}

\bigskip

\begin{theorem} \label{local}
Let $X$ be a locally compact space. Then $(B_1(X),\tau_{UC})$ is a strong Choquet space.
\end{theorem}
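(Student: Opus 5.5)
The plan is to give an explicit winning strategy for $\alpha$ in $G^s(B_1(X),\tau_{UC})$. $\alpha$ will force the functions played by $\beta$ to be Cauchy on an increasing chain of compact sets, each contained in the interior of the next. Then, instead of taking a pointwise limit (which need not exist off the union of the compacta), we glue the moves with continuous cut-off functions into a uniformly convergent series. Its sum will be a Baire 1 function lying in every $V_n$.

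\emph{Strategy.} Suppose $\beta$ plays $(f_n,U_n)$ with $f_n\in U_n$. $\alpha$ keeps a compact $K_n$ and a number $\delta_n>0$, and does the following.
\begin{enumerate}
\item Choose a compact $K$ and $\varepsilon>0$ with $W(f_n,K,\varepsilon)\cap B_1(X)\subset U_n$.
\item By local compactness, choose a compact $K_{n+1}$ with $K_n\cup K\subset \mathrm{Int}K_{n+1}$ (with $K_0$ arbitrary).
\item Choose $\delta_n<\min(\varepsilon/2,\ \delta_{n-1}/2)$.
\item Play $V_n=W(f_n,K_{n+1},\delta_n)\cap B_1(X)$.
\end{enumerate}
This is open, contains $f_n$, and lies in $U_n$. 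Since $f_{n+1}\in U_{n+1}\subset V_n$, we get $|f_{n+1}-f_n|<\delta_n$ on $K_{n+1}$. Because the $\delta_n$ decrease geometrically, $\sum_{k\ge n}\delta_k\le 2\delta_n$.

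\emph{Construction of the limit.} By Lemma \ref{Alexandroff1} applied to $C=K_k$, the compact $K_{k+1}$ and $a=0$ (take the function $1$ on $K_k$ and $0$ on $X\setminus\mathrm{Int}K_{k+1}$), there is a continuous $\varphi_k\colon X\to[0,1]$ (truncate if needed) with $\varphi_k=1$ on $K_k$ and $\varphi_k=0$ off $\mathrm{Int}K_{k+1}$. Put
$$f=f_0+\sum_{k\in\omega}\varphi_k\,(f_{k+1}-f_k).$$
The $k$-th term is bounded by $\delta_k$ everywhere, since it vanishes off $K_{k+1}$. Hence the series converges uniformly on $X$.

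On $K_n$ we have $\varphi_k=1$ for all $k\ge n$ (because $K_n\subset K_k$), so there the partial sum up to $n-1$ equals $f_n$. Therefore
$$|f-f_n|\le\sum_{k\ge n}\delta_k\le 2\delta_n<\varepsilon\quad\text{on } K_{n+1}\supset K_n .$$
Applying the same bound on $K_{n+1}$, where again all terms from index $n+1$ on have $\varphi_k=1$, gives $|f-f_{n+1}|\le 2\delta_{n+1}<\delta_n$ on $K_{n+1}$. Combined with $|f_{n+1}-f_n|<\delta_n$ there, this gives $|f-f_n|<2\delta_n$ on $K_{n+1}$. If we want strict membership $f\in V_n$, we simply shrink: $\alpha$ plays with radius $\delta_n$ but requires $\delta_{n+1}<\delta_n/4$, which makes the estimates strict and places $f$ in every $V_n$. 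So $\bigcap_n V_n\neq\emptyset$, once we know $f\in B_1(X)$.

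\emph{Baire 1 property (main obstacle).} Each partial sum is Baire 1. Indeed, $\varphi_k(f_{k+1}-f_k)$ is Baire 1 as a product and difference of pointwise limits of continuous functions, and finite sums of Baire 1 functions are Baire 1. It remains to show that a uniform limit $f$ of Baire 1 functions $s_m$ is Baire 1, and I would verify this through condition (2) of Theorem \ref{HL16}. Given $\varepsilon>0$, pick $m$ with $|f-s_m|<\varepsilon/3$ on all of $X$. Theorem \ref{HL16} gives a countable cover of $X$ by zero-sets $X_i$ with $\mathrm{diam}\,s_m(X_i)\le\varepsilon/3$. The triangle inequality, as in Lemma \ref{pointwise}, then yields $\mathrm{diam}\,f(X_i)\le\varepsilon$. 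The delicate point, and the reason for the cut-off functions, is that a naive limit of the $f_n$ exists only on $\bigcup_n\mathrm{Int}K_n$, and gluing it with $f_0$ outside need not preserve the $Zer_\sigma$ structure. The cut-offs make the convergence uniform on all of $X$, which avoids this issue.
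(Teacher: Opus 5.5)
Your strategy is essentially the paper's and is in fact winning. The gap is that the witness $f$ you construct is not in $\bigcap_n V_n$.

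\textbf{Where the argument fails.} You claim that on $K_n$ the partial sum $f_0+\sum_{k<n}\varphi_k(f_{k+1}-f_k)$ equals $f_n$. The telescoping needs $\varphi_0=\dots=\varphi_{n-1}=1$ at the point, and this is guaranteed only on $K_0$. For $k<n$ the cut-off $\varphi_k$ vanishes on $K_n\setminus\mathrm{Int}K_{k+1}$. So outside $K_1$ your $f$ follows $f_0$ rather than $f_n$, and nothing in the game keeps $f_n$ close to $f_0$ there.

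\textbf{A concrete play.} Take $X=\Bbb R$.
Let $\beta$ answer $V_0$ with $f_1=f_0+\delta_0c$, where $c\in C(X)$ vanishes on $K_1$ and $c(y)=1$ for some $y\notin K_1$. Let $U_1=V_0\cap\{g:|g(y)-f_1(y)|<1\}$, and afterwards let $\beta$ play $(f_1,V_n)$ forever.
Step (1) of your strategy forces $y\in K\subset K_2$. Otherwise, adding to $f_1$ a continuous function that is $2$ at $y$ and $0$ on $K$ would stay in $W(f_1,K,\varepsilon)$ but leave $U_1$.
Every term of your series is $0$: $\varphi_0$ vanishes outside $\mathrm{Int}K_1$, $f_1-f_0=0$ on $K_1$, and $f_{k+1}-f_k=0$ for $k\ge1$.
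Hence $f=f_0$, and $|f(y)-f_1(y)|=\delta_0>\delta_1$ with $y\in K_2$, so $f\notin V_1$.
Your Baire 1 argument for uniform limits via Theorem~\ref{HL16} is fine; only membership in the $V_n$ fails.

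\textbf{Why other cut-offs would not rescue it.} Any $g\in\bigcap_n V_n$ satisfies $|g-f_n|<\delta_n$ on $K_{n+1}$. So $g=\lim_n f_n$ is forced on $G=\bigcup_n K_n$, and this convergence is uniform on each $K_n$ but in general not on $G$. A series of your form telescopes correctly on every $K_{n+1}$ only if the $k$-th cut-off equals $1$ on all of $G$. There $f_{k+1}-f_k$ is not small, so the bound $\delta_k$ and the uniform convergence are lost.

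\textbf{What the paper does instead.} The missing idea is a proof that this merely locally uniform limit on $G$ is Baire 1.
\begin{enumerate}
\item It takes zero-sets $D_k$ with $K_k\subset D_k\subset\mathrm{Int}K_{k+1}$ (Lemma~\ref{Alexandroff}).
\item On each $D_k$ it uses a common zero-set cover for $f_0,\dots,f_k$ (Lemma~\ref{finite}).
\item Since $|f_l-f_k|<\varepsilon/3$ on $K_{k+1}$ for large $k$ and all $l>k$, the resulting countable zero-set cover of $G$ controls $\mathrm{diam}\,f_l$ for all $l$ at once.
\item Lemma~\ref{pointwise} then makes the pointwise limit Baire 1 on $G$.
\item It extends the limit by the constant $1$ off $G$. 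For this it uses continuous $g_n^*$ that agree on $K_n$ with continuous approximants $h_n$ of the limit and equal $1$ off $\mathrm{Int}K_{n+1}$ (Lemma~\ref{Alexandroff1}).
\end{enumerate}
The extension step is where a cut-off argument like yours is actually needed, applied to the $h_n$ rather than to the differences $f_{k+1}-f_k$.
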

\begin{proof}
We will define a winning strategy $s$ for the player $\alpha$ in $G^s(B_1(X))$.
Let $(f_0,U_0)$ be the first move of the player $\beta$. There is a compact set $K_0$ and positive $\epsilon_0$ such that $W(f_0,K_0,\epsilon_0) \subset U_0$. There are a compact set $K_1 \subset X$
and positive $\epsilon_1$ such that $K_0 \subset$ Int$K_1$, $\epsilon_1 < \epsilon_0/2$ and
$W(f_0,K_1,\epsilon_1) \subset W(f_0,K_0,\epsilon_0)$. Put $s((f_0,U_0)) = V_0 = W(f_0,K_1,\epsilon_1/2)$.
Let $(f_1,U_1)$ be the next move of $\beta$. Then $f_1 \in U_1 \subset W(f_0,K_1,\epsilon_1/2)$.
There are a compact set $K_2$ and positive $\epsilon_2$ such that $K_1 \subset$ Int$K_2$, $\epsilon_2 < \epsilon_1/2$ and $W(f_1,K_2,\epsilon_2) \subset U_1$. Put $s((f_0,U_0), V_0, (f_1,U_1)) = V_1 = W(f_1,K_2,\epsilon_2/2)$.
Suppose that for $n \in \omega$ we have a position

\bigskip

\centerline{$(f_0,U_0), V_0, ..., V_{n-1}, (f_n,U_n)$,}

\bigskip

and compact sets $K_i$, $\epsilon_i$ and $f_i \in B_1(X)$, $i \le n$ satisfying: $f_0 \in U_0$ and for $i \ge 1$
$f_i \in U_i \subset V_{i-1}$, $V_{i-1} = W(f_{i-1},K_i,\epsilon_i/2)$ and $K_{i-1} \subset$ Int$K_i$, $\epsilon_i < \epsilon_{i-1}/2$.
There are a compact set $K_{n+1}$ and positive $\epsilon_{n+1}$ such that
$K_n \subset$ Int$K_{n+1}$, $\epsilon_{n+1} < \epsilon_n/2$ and $W(f_n,K_{n+1},\epsilon_{n+1}) \subset U_n$. Put
$$s((f_0,U_0), V_0, ..., V_{n-1}, (f_n,U_n)) = V_n = W(f_n,K_{n+1},\epsilon_{n+1}/2).$$

We will prove that $s$ is the winning strategy for $\alpha$.
Consider an arbitrary run consistent with $\alpha$. Observe that $G = \bigcup_{n\in\omega} K_n = \bigcup_{n\in\omega}$ Int$K_n$ is open. For every $n \in \omega$ we have $K_n \subset $ Int$K_{n+1}$. By Lemma \ref{Alexandroff} there is a zero set $D_n$ such that $K_n \subset D_n \subset $ Int$K_{n+1}$. Thus the set $G = \bigcup_{n\in\omega} D_n$ is $Zer_\sigma$-set.

For every $x\in G$, the sequence $(f_n(x))_{n\in\omega}$ is Cauchy. Let $\epsilon > 0$. There is $n_0\in\omega$ such that $x\in K_{n_0}$ and $\epsilon_0/2^{n_0} < \epsilon$. For every $n\ge m\ge n_0$ we have that $f_n\in W(f_m,K_{m+1},\epsilon_{m+1}/2)$. Then
$|f_n(x) - f_m(x)|<\epsilon_{m+1}/2< \epsilon_0/2^{n_0 + 2} < \epsilon$. There is a function $f: G \to Y$ that is a pointwise limit of $(f_n\upharpoonright_{G})_{n\in\omega}$.

Now we prove that for every $\epsilon > 0$ there is a countable cover $(X_i)_{i \in \omega}$ of $G$ consisting of zero-sets such that $\text{diam}f_n(X_i) \le\varepsilon$, for all $i, n \in \omega$.

Let $\epsilon > 0$. There is $m \in \omega$ such that $\epsilon_0/2^{m} < \epsilon/3$. Fut $\mathcal F = \{f_i: i \le m\}$. By Lemma \ref{finite} there is a cover $(X_i^m)_{i \in \omega}$ of $X$ consisting of zero-sets such that $\text{diam}f(X_i^m) \le\varepsilon/3$, for all $i \in \omega$ and every $f \in \mathcal F$. Then the family $(X_i^m \cap D_m)_{i \in \omega}$ is a cover of $D_m$ consisiting of zero-sets such that $\text{diam}f(X_i^m \cap D_m) \le\varepsilon/3$, for all $i \in \omega$ and every $f \in \mathcal F$.
For every $n > m$ we have that $f_n\in W(f_m,K_{m+1},\epsilon_{m+1}/2)$. Then
$|f_n(x) - f_m(x)|<\epsilon_{m+1}/2< \epsilon_0/2^{m + 2} < \epsilon/3$ for every $x \in K_{m+1}$, thus $|f_n(x) - f_m(x)| < \epsilon/3$ also for every $x \in D_m$. Let $x, y \in X_i^m \cap D_m$, $i \in \omega$. Let $n > m$. Then $|f_n(x) - f_n(y)| \le |f_n(x) - f_m(x)| + |f_m(x) - f_m(y)| + |f_m(y) - f_n(y)| < \epsilon.$ Thus the family
$(X_i^m \cap D_m)_{i \in \omega}$ is a cover of $D_m$ consisiting of zero-sets such that $\text{diam}f_k(X_i^m \cap D_m) \le\varepsilon$, for all $i, k \in \omega$.

Now we will consider the situation on the set $D_k$ for $k > m$. We will use the same idea as above. Fut $\mathcal F = \{f_i: i \le k\}$. By Lemma \ref{finite} there is a cover $(X_i^k)_{i \in \omega}$ of $X$ consisting of zero-sets such that $\text{diam}f(X_i^k) \le\varepsilon/3$, for all $i \in \omega$ and every $f \in \mathcal F$. Then the family $(X_i^k \cap D_k)_{i \in \omega}$ is a cover of $D_k$ consisiting of zero-sets such that $\text{diam}f(X_i^k \cap D_k) \le\varepsilon/3$, for all $i \in \omega$ and every $f \in \mathcal F$.
For every $n > k$ we have that $f_n\in W(f_k,K_{k+1},\epsilon_{k+1}/2)$. Then
$|f_n(x) - f_k(x)|
<\epsilon_{k+1}/2 < \epsilon_0/2^{k + 2} < \epsilon_0/2^{m + 2} < \epsilon/3$ for every $x \in K_{k+1}$, thus $|f_n(x) - f_k(x)| < \epsilon/3$ also for every $x \in D_k$. The family
$(X_i^k \cap D_k)_{i \in \omega}$ is a cover of $D_k$ consisiting of zero-sets such that $\text{diam}f_l(X_i^k \cap D_k) \le\varepsilon$, for all $i, l \in \omega$.

Then the family
$(X_i^k \cap D_k)_{i, k \in \omega}$ is a cover of $G$ consisiting of zero-sets such that $\text{diam}f_l(X_i^k \cap D_k) \le\varepsilon$, for all $i, k, l \in \omega$. By Lemma \ref{pointwise},
 the function $f: G \to \Bbb R$ is Baire 1. Also

\bigskip

\centerline{ $f\in W(f_m\upharpoonright_G,K_{m+1},\epsilon_{m+1})$, for every $m\in\omega,$}
\bigskip

since for every $n > m$ we have that $f_n\in W(f_m,K_{m+1},\epsilon_{m+1}/2)$. It is sufficient to extend the function $f: G \to \Bbb R$ to a Baire 1 function $f^*: X \to \Bbb R$.

Let $(h_n)_{n \in \omega}$ be a sequence of continuous functions from $G$ to $\Bbb R$ such that $f(x) = $ lim$h_n(x)$ for every $x \in G$. We will define a pointwise convergent sequence of continuous functions from $X$ to $\Bbb R$, which will converge to $f$ on $G$.
 Let $n \in \omega$. Then $K_n \subset $ Int$K_{n+1}$. Put $L_n = K_n \cup (X \setminus $ Int$K_{n+1})$ and define $g_n: L_n \to \Bbb R$
as follows: $g_n(x) = h_n(x)$ if $x \in K_n$ and $g_n(x) = 1$ otherwise. By Lemma \ref{Alexandroff1}, there is a continuous extension $g_n^*: X \to \Bbb R$ of $g_n$. Define a function $f^*: X \to \Bbb R$ as follows: $f^*(x) = f(x)$, if $x \in G$ and $f(x) = 1$ if $x \in X \setminus G$. The sequence $(g_n^*)_{n \in \omega}$ is a sequence of continuous functions from $X$ to $\Bbb R$ pointwise convergent to $f^*$.
\end{proof}

\bigskip

\begin{lemma} \label{pointwise1}
Let $X$ be a topological space space and $(f_n)_{n \in \omega}$ be a sequence in $F_\sigma(X)$, which pointwise converges to a function $f : X \to \Bbb R$.
If for each $\varepsilon >0$ there is a cover $(X_i)_{i \in \omega}$ of $X$ consisting of closed sets such that $\text{diam}f_n(X_i) \le\varepsilon$, for all $i, n \in \omega$, then $f$ is also $F_\sigma$ measurable.
\end{lemma}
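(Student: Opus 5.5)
The plan is to copy the proof of Lemma \ref{pointwise}, with Theorem \ref{HL16} replaced by Theorem \ref{HL15}. By Remark \ref{lub}, Theorem \ref{HL15} is valid for an arbitrary topological space $X$. It therefore suffices to verify condition (2) of Theorem \ref{HL15} for $f$: for every $\varepsilon>0$ we need a countable cover of $X$ by closed sets on each of which $f$ has oscillation at most $\varepsilon$.

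Fix $\varepsilon>0$ and apply the hypothesis with $\varepsilon/3$. This gives a cover $(X_i)_{i\in\omega}$ of $X$ by closed sets with $\text{diam} f_n(X_i)\le\varepsilon/3$ for all $i,n\in\omega$. We claim that this same cover works for $f$.

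Take $i\in\omega$ and $x,y\in X_i$. Since $f_n\to f$ pointwise, there is a single $n$ with $|f(x)-f_n(x)|<\varepsilon/3$ and $|f(y)-f_n(y)|<\varepsilon/3$. The triangle inequality then gives
\[
|f(x)-f(y)|\le |f(x)-f_n(x)|+|f_n(x)-f_n(y)|+|f_n(y)-f(y)|<\varepsilon .
\]
Hence $\text{diam} f(X_i)\le\varepsilon$, and Theorem \ref{HL15} (via Remark \ref{lub}) shows that $f$ is $F_\sigma$ measurable.

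The only point needing care is Remark \ref{lub}, i.e.\ that the implication (2) $\Rightarrow$ (1) of Theorem \ref{HL15} holds without metrizability. We verify it exactly as in the proof of Theorem \ref{HL16}. Choose closed covers $(X_{k,n})_{n\in\omega}$ with $\text{diam} f(X_{k,n})<1/(k+1)$. For an open $U\subset\Bbb R$, the set $f^{-1}(U)$ is the union of those $X_{k,n}$ with $f(X_{k,n})\subset U$. This is a countable union of closed sets, hence $F_\sigma$. No metric on $X$ enters, so the argument is routine, and the hypothesis $f_n\in F_\sigma(X)$ is not actually needed.
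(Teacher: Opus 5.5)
Your proposal is correct and matches the paper's intended argument: the paper's proof only says to use Theorem \ref{HL15}, meaning to rerun the $\varepsilon/3$ argument of Lemma \ref{pointwise} with closed sets in place of zero-sets and Theorem \ref{HL15} (via Remark \ref{lub}) in place of Theorem \ref{HL16}, which is exactly what you do. Your explicit check of Remark \ref{lub} is accurate (apply (2) with, say, $1/(k+2)$ to get the strict bound $1/(k+1)$), and so is your observation that the hypothesis $f_n\in F_\sigma(X)$ is redundant.
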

\begin{proof} We will use Theorem \ref{HL15}.
\end{proof}

\begin{theorem} \label{compact}
Let $X$ be a Tychonoff $\sigma$-compact space. Then $(B_1(X),\tau_{UC})$ is a strong Choquet space.
\end{theorem}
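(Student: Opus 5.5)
We follow the scheme of Theorem \ref{local}, but use that a $\sigma$-compact Tychonoff space is Lindel\"of, hence normal. By Proposition \ref{la} we then have $B_1(X)=F_\sigma(X)$, and Remark \ref{lub} lets us use Theorem \ref{HL15} with closed covers. Write $X=\bigcup_{n\in\omega}C_n$ with each $C_n$ compact, and assume the $C_n$ are increasing.

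\emph{The strategy for $\alpha$.} Let $\beta$ play $(f_n,U_n)$ at the $n$th move. Choose a compact set $K_{n+1}$ and $\epsilon_{n+1}>0$ such that
\[ W(f_n,K_{n+1},\epsilon_{n+1})\subset U_n,\qquad K_n\cup C_n\subset K_{n+1},\qquad \epsilon_{n+1}<\epsilon_n/2 . \]
Then $\alpha$ answers $V_n=W(f_n,K_{n+1},\epsilon_{n+1}/2)$. Interiors are not needed here, because $\bigcup_n K_n=X$ holds automatically.

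\emph{The limit function.} For any play consistent with this strategy and any $m<n$ we have $f_n\in V_m$. Hence $|f_n-f_m|<\epsilon_{m+1}/2$ on $K_{m+1}$. So $(f_n(x))$ is Cauchy at every $x\in X$, and it converges pointwise to some $f:X\to\Bbb R$. Letting $n\to\infty$ gives $|f-f_m|\le\epsilon_{m+1}/2<\epsilon_{m+1}$ on $K_{m+1}$. Thus $f\in W(f_m,K_{m+1},\epsilon_{m+1})\subset U_m$ for every $m$, once we know $f\in B_1(X)$. The sets $V_m$ have the $\epsilon_{m+1}/2$ radius precisely so that $f$ lies in these larger neighbourhoods.

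\emph{Showing $f\in B_1(X)$.} This is the main step, and we verify the hypothesis of Lemma \ref{pointwise1} (equivalently Lemma \ref{pointwise}).
\begin{enumerate}
\item Fix $\epsilon>0$ and choose $m$ with $\epsilon_0/2^m<\epsilon/3$.
\item For each $k\ge m$, Lemma \ref{finite} gives a countable closed (zero-set) cover $(X^k_i)_i$ of $X$ on which each of $f_0,\dots,f_k$ has diameter $\le\epsilon/3$.
\item Intersect these sets with the compact, hence closed, sets $K_{k+1}$.
\item For $n>k$ and $x\in K_{k+1}$ we have $|f_n(x)-f_k(x)|<\epsilon/3$. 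The triangle inequality then gives $\operatorname{diam} f_n(X^k_i\cap K_{k+1})<\epsilon$ for every $n$ with $n>k$ or $n\le k$.
\item The family $(X^k_i\cap K_{k+1})_{i\in\omega,\,k\ge m}$ is countable and consists of closed sets. It covers $X$ because the $K_{k+1}$ increase and exhaust $X$, and $K_{k+1}\supset K_{m+1}$ for $k\ge m$.
\end{enumerate}
Lemma \ref{pointwise1} then shows $f\in F_\sigma(X)$, and $F_\sigma(X)=B_1(X)$ by normality.

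The only delicate point is this last step. The diameter bounds for $f_n$ with $n\le k$ come from Lemma \ref{finite}, and those for $n>k$ come from the uniform closeness on $K_{k+1}$; the triangle inequality loses two factors of $\epsilon/3$, which is why $\epsilon/3$ is used in step 2. A zero-set version would require $K_{k+1}$ to be a zero set. That holds in a normal space only for closed $G_\delta$ sets, so routing through closed sets and Laczkowich's result avoids the issue. Unlike Theorem \ref{local}, no extension step is needed, since $f$ is already defined on all of $X$.
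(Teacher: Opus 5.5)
Your proposal is correct and follows the paper's proof essentially step for step. It uses the same strategy ($K_n\cup C_n\subset K_{n+1}$, $V_n=W(f_n,K_{n+1},\epsilon_{n+1}/2)$) and the same pointwise limit, then feeds a countable closed cover into Lemma \ref{pointwise1} and concludes via normality and Laczkowich's Proposition \ref{la}. Your explicit cover $(X^k_i\cap K_{k+1})_{i\in\omega,\,k\ge m}$ just spells out the step the paper only sketches as ``a similar idea as in the proof of Theorem \ref{local}''.
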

\begin{proof}
We will define a winning strategy $s$ for the player $\alpha$ in $G^s(B_1(X))$. Let $(C_n)_{n \in \omega}$ be a sequence of compact sets in $X$ such that $C_n \subset C_{n+1}$ for every $n \in \omega$ and $X = \bigcup_{n \in \omega} C_n$.
Let $(f_0,U_0)$ be the first move of the player $\beta$. There is a compact set $K_0$ and positive $\epsilon_0$ such that $W(f_0,K_0,\epsilon_0) \subset U_0$. There are a compact set $K_1 \subset X$
and positive $\epsilon_1$ such that $K_0 \cup C_0 \subset K_1$, $\epsilon_1 < \epsilon_0/2$ and
$W(f_0,K_1,\epsilon_1) \subset W(f_0,K_0,\epsilon_0)$. Put $s((f_0,U_0)) = V_0 = W(f_0,K_1,\epsilon_1/2)$.
Let $(f_1,U_1)$ be the next move of $\beta$. Then $f_1 \in U_1 \subset W(f_0,K_1,\epsilon_1/2)$.
There are a compact set $K_2$ and positive $\epsilon_2$ such that $K_1 \cup C_1 \subset K_2$, $\epsilon_2 < \epsilon_1/2$ and $W(f_1,K_2,\epsilon_2) \subset U_1$. Put $s((f_0,U_0), V_0, (f_1,U_1)) = V_1 = W(f_1,K_2,\epsilon_2/2)$.
Suppose that for $n \in \omega$ we have a position

\bigskip

\centerline{$(f_0,U_0), V_0, ..., V_{n-1}, (f_n,U_n)$,}

\bigskip

and compact sets $K_i$, $\epsilon_i$ and $f_i \in B_1(X)$, $i \le n$ satisfying: $f_0 \in U_0$ and for $i \ge 1$
$f_i \in U_i \subset V_{i-1}$, $V_{i-1} = W(f_{i-1},K_i,\epsilon_i/2)$ and $K_{i-1} \cup C_{i-1} \subset K_i$ and $\epsilon_i < \epsilon_{i-1}/2$.
There are a compact set $K_{n+1}$ and positive $\epsilon_{n+1}$ such
$K_n \cup C_n \subset K_{n+1}$, $\epsilon_{n+1} < \epsilon_n/2$ and $W(f_n,K_{n+1},\epsilon_{n+1}) \subset U_n$.
Put $s(f_0,U_0), V_0, ..., V_{n-1}, (f_n,U_n)) = V_n = W(f_n,K_{n+1},\epsilon_{n+1}/2)$.

We will prove that $s$ is the winning strategy for $\alpha$.
Consider an arbitrary run consistent with $\alpha$. Observe that $X = \bigcup_{n\in\omega} K_n = \bigcup_{n\in\omega}C_n$.

For every $x\in X$, the sequence $(f_n(x))_{n\in\omega}$ is Cauchy. Let $\epsilon > 0$. There is $n_0\in\omega$ such that $x\in K_{n_0}$ and $\epsilon_0/2^{n_0} < \epsilon$. For every $n\ge m\ge n_0$ we have that $f_n\in W(f_m,K_{m+1},\epsilon_{m+1}/2)$. Then
$|f_n(x) - f_m(x)| <\epsilon_{m+1}/2< \epsilon_0/2^{n_0 + 2} < \epsilon$. There is a function $f: X \to \Bbb R$ that is a pointwise limit of $(f_n)_{n\in\omega}$.

Using Theorem \ref{HL15} and a similar idea as in the proof of Theorem \ref{local} we prove that for every $\epsilon > 0$ there is a countable cover $(X_i)_{i \in \omega}$ of $X$ consisting of closed sets such that $\text{diam}f_n(X_i) \le\varepsilon$, for all $i, n \in \omega$. By Lemma \ref{pointwise1} and Theorem \ref{HL15}, the function $f$ is also $F_\sigma$ measurable. Since a $\sigma$-compact Tychonoff space is normal, by Proposition \ref{la}, the function $f$ is Baire 1. Also

\bigskip

\centerline{ $f\in W(f_m,K_{m+1},\epsilon_{m+1})$, for every $m\in\omega,$}
\bigskip

since for every $n > m$ we have that $f_n\in W(f_m,K_{m+1},\epsilon_{m+1}/2)$.
\end{proof}

\begin{corollary} \label{Baire}
Let $X$ be a countable Tychonoff space. Then $(B_1(X),\tau_{UC})$ is a strong Choquet space.
\end{corollary}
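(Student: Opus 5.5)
This is a direct consequence of Theorem \ref{compact}. A countable Tychonoff space $X=\{x_k: k\in\omega\}$ is $\sigma$-compact, since it is the countable union of the singletons $\{x_k\}$, each of which is compact. If we want the increasing sequence of compact sets used in the proof of Theorem \ref{compact}, we can take the finite sets $C_n=\{x_k: k\le n\}$. These are compact, $C_n\subset C_{n+1}$, and $X=\bigcup_{n\in\omega}C_n$. Applying Theorem \ref{compact} then gives that $(B_1(X),\tau_{UC})$ is a strong Choquet space.

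The one point worth checking is that Theorem \ref{compact} really applies, in particular the step of its proof that invokes Proposition \ref{la}. That step needs $X$ to be normal. A countable Tychonoff space is regular and Lindelöf, since every open cover has a countable subcover trivially. Regular Lindelöf spaces are normal, so Proposition \ref{la} gives $F_\sigma(X)=B_1(X)$.

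Alternatively, we can avoid the normality argument altogether. In a countable space every subset is a countable union of singletons. Each singleton is a zero-set, because in a Tychonoff space each point is the zero-set of some continuous function, given that points are closed and a countable intersection of cozero sets can be used. Hence every subset of $X$ is a $Zer_\sigma$-set, and therefore every function $X\to\Bbb R$ is Baire 1, that is, $B_1(X)=\Bbb R^X$.

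In that case the strong Choquet property can even be seen directly. The space $(\Bbb R^X,\tau_{UC})$ carries the winning strategy from the proof of Theorem \ref{compact}: the pointwise limit $f$ constructed there automatically belongs to $B_1(X)$, and $f\in\bigcap_m W(f_m,K_{m+1},\epsilon_{m+1})\subset\bigcap_m V_m$. There is no real obstacle here. The only care needed is in the normality or zero-set verification.
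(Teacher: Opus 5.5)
Your argument is correct and matches the paper, which gives no separate proof and treats this as an immediate corollary: a countable Tychonoff space is $\sigma$-compact, so Theorem \ref{compact} applies, and the normality needed for Proposition \ref{la} is already handled inside that theorem's proof. Your side observation that $B_1(X)=\Bbb R^X$ is also right, but "each point of a Tychonoff space is a zero-set" is false in general; the correct justification is that for each $y\ne x$ there is $f_y\in C(X)$ with $f_y(x)=0$ and $f_y(y)=1$, so $\{x\}=\bigcap_{y\ne x}f_y^{-1}(0)$ is a countable intersection of zero-sets and hence a zero-set.
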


\begin{corollary}
Let $Q$ be the space of rationals equipped with the usual topology. Then $(B_1(Q),\tau_{UC})$ is a strong Choquet space.
\end{corollary}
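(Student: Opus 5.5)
The corollary about $Q$ follows directly from Corollary \ref{Baire}, or equivalently from Theorem \ref{compact}. The space $Q$ of rationals with the usual topology is a countable metrizable space, hence Tychonoff. It is also $\sigma$-compact, since it is the countable union of its singletons $\{q\}$, $q\in Q$, each of which is compact.

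So I would simply invoke Corollary \ref{Baire}. To make the chain explicit, I would also note the route through Theorem \ref{compact}. Enumerate $Q=\{q_n: n\in\omega\}$ and put $C_n=\{q_0,\dots,q_n\}$. This gives an increasing sequence of compact sets covering $Q$, exactly as required at the start of the proof of Theorem \ref{compact}. The theorem then yields a winning strategy for $\alpha$ in $G^s(B_1(Q))$ with respect to $\tau_{UC}$.

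The normality used inside Theorem \ref{compact}, needed to apply Proposition \ref{la}, holds trivially here because $Q$ is metrizable. One could even bypass Laczkowich's result and use Lebesgue's equality $F_\sigma(Q)=B_1(Q)$ for metric spaces directly.

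There is no real obstacle in this step. The only point to check is that $Q$ satisfies the hypotheses, namely countable (hence $\sigma$-compact) and Tychonoff, and both are immediate. It is worth remarking that $Q$ is not hemicompact, so by Corollary \ref{hemicompact} the space $(B_1(Q),\tau_{UC})$ is not Čech-complete. The corollary therefore gives a strong Choquet example outside the completely metrizable case.
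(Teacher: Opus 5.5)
Your proposal is correct and follows the paper's route: $Q$ is a countable, hence $\sigma$-compact, Tychonoff space, so the statement is an immediate instance of Corollary \ref{Baire}, that is, of Theorem \ref{compact}. Your side remarks are also accurate: normality holds because $Q$ is metrizable, and since $Q$ is not hemicompact, Corollary \ref{hemicompact} shows that $(B_1(Q),\tau_{UC})$ is strong Choquet without being \v Cech-complete.
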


The following Example shows that Corollary \ref{Baire} does not hold for $C(X)$.

\begin{example} Let $\omega$ be equipped with the discrete topology and $\beta \omega$ be the \v Cech-Stone compactification of $\omega$. Choose $q \in \beta \omega \setminus \omega$. Let $X = \omega \cup \{q\}$ and $X$ has the topology inherited from $\beta \omega$. Every compact set in $X$ is finite, thus the topology of uniform convergence on compacta on $C(X)$ coincides with the topology of pointwise convergence on $C(X)$. It was shown in \cite{LM} that $C(X)$ with the topology of pointwise convergence is Baire but not Choquet. Thus $(C(X),\tau_{UC})$ cannot be strong Choquet, since every strong Choquet space is Choquet \cite{Kech}. By Corollary \ref{hemicompact} the space $(B_1(X),\tau_{UC})$ is completely metrizable.
\end{example}

\bigskip

The condition of local compactness in Theorem 3.5 and the condition of $\sigma$-compactness in Theorem 3.7 are essential.

\begin{proposition} Let $\Bbb S$ be the Sorgenfrey line. Then $(B_1(\Bbb S),\tau_{UC})$ is of the first Baire category in itself.
\end{proposition}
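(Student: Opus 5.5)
The plan is to reduce the proposition to showing that $(B_1(\Bbb S),\tau_{UC})$ is not a Baire space, and then to prove non-Baireness with a game argument in which the non-$\alpha$ player wins. For the reduction, note that $(B_1(\Bbb S),\tau_{UC})$ is a topological group under pointwise addition: sums and differences of Baire 1 functions are Baire 1, and $W(K,\varepsilon)$ is translation invariant. Hence the space is homogeneous. If some nonempty open $W(f,K,\varepsilon)$ is of first category, then translates of it cover the space. By the Banach category theorem, or by a direct Lindelöf-type argument in the uniformity, the whole space is then of first category in itself. By Oxtoby's theorem, a nonempty open meager set exists as soon as $\beta$ has a winning strategy in the Choquet game $G(B_1(\Bbb S))$, i.e.\ can force $\bigcap_n V_n=\emptyset$. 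So the goal is to construct such a strategy for $\beta$.

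Two structural facts about $\Bbb S$ will be used.
\begin{enumerate}
\item Every compact subset of $\Bbb S$ is countable. Hence each $\tau_{UC}$-basic set $W(g,K,\varepsilon)$ constrains $g$ only on a countable closed set $K$.
\item Every countable closed set $K\subset\Bbb S$ is a zero-set, because $\Bbb S$ is perfectly normal. Consequently any function that is arbitrary on $K$ and constant on $\Bbb S\setminus K$ is $F_\sigma$ measurable, and by Proposition \ref{la} it is Baire 1 ($\Bbb S$ is normal).
\end{enumerate}
Fact (2) gives $\beta$ complete freedom in the moves: any prescribed values on a countable compact set are realised by some member of $B_1(\Bbb S)$. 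Fact (1) shows that the intersection $\bigcap_n V_n$ is determined only on $\bigcup_n K_n$, a countable set $D$. Therefore $\beta$ must arrange the values on $D$ so that no Baire 1 function on $\Bbb S$ can have them.

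For the obstruction I would use the Lebesgue-type characterization, Theorem \ref{HL16} with Remark \ref{lub}. Every $S$-closed set differs from its Euclidean closure by a countable set. So if $f\in B_1(\Bbb S)$ and $(X_i)$ is a closed cover with $\text{diam} f(X_i)\le 1$, the Baire category theorem in $\Bbb R$ gives an interval $I$ and an index $i$ with $I\setminus X_i$ countable. Thus a Baire 1 function on $\Bbb S$ has oscillation at most $1$ on a co-countable part of some interval. Fix an enumeration $(I_k)$ of rational intervals and interleave tasks. At stage $n$, for the $k$-th interval, $\beta$ chooses a sequence in $I_k$ converging from the right (hence convergent in $\Bbb S$) to a point outside everything fixed so far. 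He adds this compact set to $K_{n+1}$ and prescribes values alternating between $0$ and $2$ along it. He also shrinks $\varepsilon_n\to 0$, so that any function in $\bigcap_n V_n$ has these values up to $1/4$.

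The main obstacle is the last step. Co-countability of $I\setminus X_i$ does not force $X_i$ to meet the countable set $D$ chosen by $\beta$, because $X_i$ depends on $f$ and $f$ is chosen after the play. To overcome this I would try to make the chosen sequences \emph{uncountably robust}. One way is to let $\beta$ fix in advance a family of $\mathfrak c$ many pairwise disjoint right-convergent sequences, indexed by a Cantor set $P\subset I_k$. Then $\beta$ constrains, at each move, a countable compact subfamily, in a Gruenhage--Ma style diagonalisation. The aim is that any $f$ in the intersection has oscillation $\ge 1$ at uncountably many points of every $I_k$ that are $S$-limits of constrained points, contradicting the conclusion above. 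If this cannot be achieved within countably many moves, I would instead show directly that the sets
$$N_{k,m}=\{f\in B_1(\Bbb S): \exists \text{ an } S\text{-closed } F,\ |I_k\setminus F|\le\aleph_0,\ \text{diam} f(F)\le 1\}$$
have $\tau_{UC}$-closures with empty interior. I expect verifying this nowhere density, against open sets that see only countably many points, to be the crux of the proof.
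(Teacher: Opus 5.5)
Your proposal has a genuine gap: it never reaches a proof, and it stalls on an obstacle that is not actually there. The concrete strategy for $\beta$ (values alternating between $0$ and $2$ along right-convergent sequences) only forces discontinuities at countably many limit points. A Baire 1 function on $\Bbb S$ can have such discontinuities, so, as you note, this yields no contradiction. The Cantor-set variant and the nowhere density of the $N_{k,m}$ are left unverified.

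The missed observation is this. Suppose $X_i$ is closed in $\Bbb S$ and $I\setminus X_i$ is countable for an open interval $I$. Then $I\setminus X_i$ is a countable \emph{open} subset of $\Bbb S$. It is therefore empty, because every nonempty open set of $\Bbb S$ contains some $[c,d)$. So $I\subseteq X_i$, and your claim that $X_i$ need not meet the countable set $D$ is false. The paper reaches the same conclusion by applying the Baire category theorem in $\Bbb S$ itself rather than to Euclidean closures: some $X_i$ has nonempty Sorgenfrey interior and so contains a rational interval $I_n$.

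Once you have this, the proof is direct; no game, group structure or Banach category theorem is needed.
\begin{itemize}
\item Enumerate the rational intervals $I_n$ and set $L_n=\{f\in B_1(\Bbb S):\text{diam}\,f(I_n)\le 1\}$. By the above, $B_1(\Bbb S)=\bigcup_n L_n$.
\item Each $L_n$ is $\tau_{UC}$-closed, since $\text{diam}\,g(I_n)>1$ is witnessed by two points, which form a compact set.
\item Each $L_n$ has empty interior, since $W(f,K,\varepsilon)$ controls only the countable set $K$. Pick $a\neq b$ in $I_n\setminus K$ and let $g$ agree with $f$ except that $g(a)=f(b)+2$. Points are closed $G_\delta$ sets in $\Bbb S$, so $g$ is still $F_\sigma$-measurable, hence Baire 1 by Proposition~\ref{la}. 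Moreover $g\in W(f,K,\varepsilon)$ and $g\notin L_n$.
\end{itemize}
Changing $f$ at one point is safer than the paper's literal $g$. That $g$ is constant off $K$, so it is constant on $I_n$ when $I_n\cap K=\emptyset$.

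In your game language the same idea works. At move $k$, $\beta$ takes a basic $W(h,K,\delta)$ inside $\alpha$'s last move. He then adds two new points of $I_k\setminus K$ with target values $0$ and $2$ and tolerance $<1/2$. Every $f\in\bigcap_n V_n$ would then lie outside every $L_k$, which is impossible. Your reductions via homogeneity and Oxtoby's theorem are correct but superfluous.
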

\begin{proof} The Sorgenfrey line $\Bbb S$ is a hereditarily Baire, perfectly normal space in which every compact set is countable. Enumerate all intervals of the type $(a,b)$, where $a < b$ and $a, b$ are rationals as $\{I_n: n \in \omega\}$. For every $n \in \omega$ put

$$L_n = \{f \in B_1(\Bbb S): diam f(I_n) \le 1\}.$$

\bigskip

We claim that $B_1(\Bbb S) = \bigcup_{n \in \omega} L_n$. Let $f \in B_1(\Bbb S)$. Since $f$ is $F_\sigma$-measurable, by Remark \ref{lub} and Theorem \ref{HL15} there is  a cover $(X_i)_{i \in \omega}$ of $\Bbb S$ consisting of closed sets in $\Bbb S$ such that $\text{diam}f(X_i) \le 1$, for all $i \in \omega$. Since $\Bbb S$ is a Baire space, there is $i \in \omega$ such that the set $X_i$ has a nonempty interior in $\Bbb S$. There is $n \in \omega$ such that $I_n \subset X_i$, so $\text{diam}f(I_n) \le 1$. Thus $f \in L_n$.

It is easy to verify that $L_n$ is a closed set in $(B_1(\Bbb S),\tau_{UC})$ for every $n \in \omega$. We prove that $L_n$ is a nowhere dense set in $(B_1(\Bbb S),\tau_{UC})$ for every $n \in \omega$. Suppose there is $n \in \omega$ such that the set $L_n$ has a nonempty interior in $(B_1(\Bbb S),\tau_{UC})$. There is $f \in B_1(\Bbb S)$, a compact set $K$ in $\Bbb S$ and $\epsilon > 0$ such that $W(f,K,\epsilon) \subset L_n$. Since $K$ is countable,  $I_n \setminus K \ne \emptyset$. Let $a_n \in I_n \setminus K$. Put $M = f(a_n) + 2$. Define the function $g$ as follows: $g(x) = f(x)$ for every $x \in K$ and $g(x) = M$ otherwise. Since $\Bbb S \setminus K$ is $F_\sigma$-set, the function $g$ is $F_\sigma$-measurable. $\Bbb S$ is a normal space, thus by Proposition \ref{la}, $g$ is Baire 1. However $\text{diam}g(I_n) > 1$, so $g \in W(f,K,\epsilon) \setminus L_n$, a contradiction.

\end{proof}

\bigskip

\bigskip

\bigskip

\section{$(B_1(X),\tau_{UC})$ for a topological sum $X$}

\bigskip

\bigskip

\begin{lemma} \label{sum}
If $X= \bigoplus_{i\in I} X_i$, is a topological sum of topological spaces $X_i$, $i \in I$, then $(B_1(X),\tau_{UC})$ is homeomorphic to the product $\prod_{i \in I} (B_1(X_i),\tau_{UC})$.
\end{lemma}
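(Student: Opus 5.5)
The plan is to use the restriction map $\Phi\colon B_1(X)\to \prod_{i\in I} B_1(X_i)$, $\Phi(f)=(f\upharpoonright_{X_i})_{i\in I}$, and to show that it is well defined, bijective, and a homeomorphism when both sides carry $\tau_{UC}$, the product carrying the Tychonoff product topology.

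\emph{Well-definedness.} If $f$ is the pointwise limit of continuous $h_n\colon X\to\Bbb R$, then $f\upharpoonright_{X_i}$ is the pointwise limit of the continuous $h_n\upharpoonright_{X_i}$. So $\Phi$ maps into the product, and it is clearly injective.

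\emph{Surjectivity.} This is the main point. Given $(f_i)_{i\in I}$ with $f_i\in B_1(X_i)$, define $f$ on $X$ by $f\upharpoonright_{X_i}=f_i$. For each $i$ choose continuous $h^i_n\colon X_i\to\Bbb R$ converging pointwise to $f_i$. Define $h_n$ on $X$ by $h_n\upharpoonright_{X_i}=h^i_n$. Each $h_n$ is continuous because every $X_i$ is clopen in the topological sum, and $h_n\to f$ pointwise. Hence $f\in B_1(X)$ and $\Phi(f)=(f_i)_{i\in I}$. The choice of the sequences $h^i_n$ for all $i$ simultaneously uses the axiom of choice, which is harmless here. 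An alternative route would go through Theorem \ref{HL16}: zero-sets of the $X_i$ combine to zero-sets of $X$, but countability of the cover would fail for uncountable $I$. The direct approach is therefore the right one.

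\emph{Homeomorphism.} I use the fact that every compact $K\subset X$ meets only finitely many $X_i$, since $\{X_i\}$ is an open cover by pairwise disjoint sets. Hence $K=\bigcup_{i\in F}K_i$ with $F$ finite and $K_i=K\cap X_i$ compact (closed in $K$).

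For continuity of $\Phi$ at $f$, it suffices to treat a basic product neighbourhood $\prod_{i\in F}W(f_i,K_i,\varepsilon)\times\prod_{i\notin F}B_1(X_i)$ with $F$ finite. This neighbourhood contains $\Phi(W(f,\bigcup_{i\in F}K_i,\varepsilon))$, and $\bigcup_{i\in F}K_i$ is compact in $X$.

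Conversely, given $W(f,K,\varepsilon)$, decompose $K$ as above. Then $\Phi(W(f,K,\varepsilon))$ is exactly the basic product open set determined by $W(f\upharpoonright_{X_i},K_i,\varepsilon)$ for $i\in F$. So $\Phi$ is open, and therefore $\Phi$ is a homeomorphism.
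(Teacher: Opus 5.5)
Your proof is correct and follows the paper's proof. Both use the same restriction map, get bijectivity by gluing on the clopen summands, and get continuity in both directions from the fact that a compact set meets only finitely many $X_i$. The paper phrases the continuity checks with nets and asserts without proof that the glued function is Baire 1, which you actually verify.

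One slip is cosmetic. $W(f,K,\varepsilon)$ is a basic neighbourhood but need not be open, since the supremum of $|f-g|$ over $K$ need not be attained for Baire 1 functions. So read ``basic product open set'' as ``basic product neighbourhood''; the conclusion that $\Phi$ is open is unaffected.
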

\begin{proof} Define the mapping $\Psi: (B_1(X),\tau_{UC}) \to \prod_{i \in I} (B_1(X_i),\tau_{UC})$ as follows: $\Psi(f) = (f|X_i)_{i \in I}$ for $f \in B_1(X$. For every $i \in I$ the function $f|X_i \in B_1(X_i)$. It is easy to verify that $\Psi$ is a bijection. If $(f_i)_{i \in I} \in \prod_{i \in I} B_1(X_i)$,
then $\Psi^{-1}((f_i)_{i \in I})$ is the following function: $\Psi^{-1}((f_i)_{i \in I})(x) = f_i(x)$ if $x \in X_i$. Of course, $\Psi^{-1}((f_i)_{i \in I})$ is a Baire 1 function from $X$ to $\Bbb R$. Now we prove that $\Psi$ is homeomorphisms.
Suppose that the net $\{f_\sigma:\sigma \in \Sigma\}$ converges to $f$ in $(B_1(X),\tau_{UC})$. Let $O $ be an open neighbourhood of $\Psi(f)$ in $\prod_{i \in I} (B_1(X_i),\tau_{UC})$. There are a finite subset $J \subset I$, compact sets $K_j \subset X_j$, $j \in J$ and positive $\epsilon_j$, $j \in J$ such that

\bigskip

\centerline{$\prod_{j \in j} W(f|X_j,K_j,\epsilon_j) \times \prod_{i \in I \setminus J} B_1(X_i) \subset O.$}

\bigskip

Put $K = \cup_{j \in J} K_j$ and $\epsilon = min\{\epsilon_j: j \in J\}$. There is $\sigma_0 \in \Sigma$ such that for every $\sigma \ge \sigma_0$, $f_\sigma \in W(f,K,\epsilon)$. Thus for every $\sigma \ge \sigma_0$, $\Psi(f_\sigma) \in O$.

Suppose now that the net $\{F_\sigma:\sigma \in \Sigma\}$ converges to $F$ in $\prod_{i \in I} (B_1(X_i),\tau_{UC})$. Then $F = (f_i)_{i \in I}$ and $F_\sigma = (f_{\sigma,i})_{i \in I}$ for every $\sigma \in \Sigma$. Let $G$ be an open neighbourhood of $\Psi^{-1}((f_i)_{i \in I})$. There are a compact set $K \subset X$ and $\epsilon > 0$ such that

\bigskip
\centerline{$W(\Psi^{-1}((f_i)_{i \in I}),K,\epsilon) \subset G.$}
\bigskip
There is a finite subset $J $ of $I$ such that $K = \cup_{j \in J} K \cap X_j$. There is $\sigma_0 \in \Sigma$ such that

\bigskip
\centerline{$(f_{\sigma,i})_{i \in I} \in \prod_{j \in j} W(f_j,K \cap X_j,\epsilon) \times \prod_{i \in I \setminus J} B_1(X_i)$ for every $\sigma \ge \sigma_0.$}
\bigskip

Thus for every $\sigma \ge \sigma_0$ $\Psi^{-1}((f_{\sigma,i})_{i \in I}) \in W(\Psi^{-1}((f_i)_{i \in I}),K,\epsilon) \subset G.$
\end{proof}

\begin{theorem}
Let $X= \bigoplus_{i\in I} X_i$, be a topological sum of $\sigma$-compact spaces $X_i$, $i \in I$. Then $(B_1(X),\tau_{UC})$ is a strong Choquet space.
\end{theorem}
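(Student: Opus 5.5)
The proof combines Lemma \ref{sum}, Theorem \ref{compact}, and the fact that the strong Choquet property is productive for arbitrary products. By Lemma \ref{sum}, $(B_1(X),\tau_{UC})$ is homeomorphic to $\prod_{i \in I} (B_1(X_i),\tau_{UC})$. Each $X_i$ is a Tychonoff $\sigma$-compact space, so by Theorem \ref{compact} each factor $(B_1(X_i),\tau_{UC})$ is strong Choquet. Being strong Choquet is a topological property, so it remains to show that an arbitrary product of strong Choquet spaces is strong Choquet.

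For the productivity step, fix winning strategies $s_i$ for $\alpha$ in $G^s(B_1(X_i))$. Describe a strategy for $\alpha$ in the product as follows. When $\beta$ plays $(F_n,U_n)$ with $F_n=(f_{n,i})_{i\in I}$, $\alpha$ first shrinks $U_n$ to a basic open box $\prod_{i\in J_n} U_{n,i}\times\prod_{i\notin J_n}B_1(X_i)$ containing $F_n$ and contained in $U_n$, where $J_n$ is finite. The choice of $J_n$ must be made so that $J_0\subset J_1\subset\cdots$. For every $i\in J_n$, coordinate $i$ is treated as having an active run of $G^s(B_1(X_i))$ starting at the first stage $n_i$ with $i\in J_{n_i}$. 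In that run, $\beta$'s moves at stages $n\ge n_i$ are $(f_{n,i},U_{n,i})$, and $\alpha$ answers $V_{n,i}=s_i(\ldots)$. One must check that $U_{n,i}\subset V_{n-1,i}$ for $i\in J_{n-1}$. This holds because we may intersect: since $U_n\subset V_{n-1}=\prod_{i\in J_{n-1}}V_{n-1,i}\times\prod$, we can choose the box with $U_{n,i}\subset V_{n-1,i}$ and $f_{n,i}\in U_{n,i}$. Then $\alpha$ plays $V_n=\prod_{i\in J_n}V_{n,i}\times\prod_{i\notin J_n}B_1(X_i)$, which is open, contains $F_n$, and lies inside $U_n$.

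At the end of a play, for each $i\in J=\bigcup_n J_n$ the coordinate run is consistent with $s_i$, so $\bigcap_{n\ge n_i}V_{n,i}\ne\emptyset$; pick $g_i$ in this intersection. For $i\notin J$, pick any $g_i\in B_1(X_i)$, for instance $f_{0,i}$. Then $G=(g_i)_{i \in I}$ lies in every $V_n$, so $\alpha$ wins.

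The main obstacle is bookkeeping rather than mathematics. A coordinate that enters at a late stage $n_i$ must start a fresh game whose first $\beta$-move is $(f_{n_i,i},U_{n_i,i})$, and each $V_n$ must still be a legitimate subset of $U_n$. The box-shrinking argument above handles both. It is also worth noting that strategies may depend on the full history, which is needed for $\alpha$ to know when coordinates entered. Everything else reduces directly to Theorem \ref{compact} and Lemma \ref{sum}.
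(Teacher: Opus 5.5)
Your proof is correct and takes the same route as the paper: Lemma~\ref{sum} identifies $(B_1(X),\tau_{UC})$ with $\prod_{i\in I}(B_1(X_i),\tau_{UC})$, Theorem~\ref{compact} makes each factor strong Choquet, and the strong Choquet property passes to arbitrary products. The only difference is that the paper cites Kechris for the product step, while you supply the standard bookkeeping argument (increasing finite supports $J_n$, a fresh coordinate run started when $i$ first enters, and $U_{n,i}\subset V_{n-1,i}$, which is automatic because a nonempty box inside $V_{n-1}$ has its $i$-th factor inside $V_{n-1,i}$); your argument is valid, and it works for uncountable $I$ because each move constrains only finitely many coordinates.
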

\begin{proof}
By Theorem \ref{compact} the space $(B_1(X_i),\tau_{UC})$ is a strong Choquet space for every $i \in I$. By Lemma \ref{sum} $(B_1(X),\tau_{UC})$ is homeomorphic to the product $\prod_{i \in I} (B_1(X_i),\tau_{UC})$. By \cite{Kech} any product of strong Choquet spaces is strong Choquet.
\end{proof}

\begin{proposition}
Let $X$ be a discrete space. Then $(B_1(X),\tau_{UC})$ is a strong Choquet space.
\end{proposition}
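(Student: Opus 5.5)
For a discrete space $X$ the statement follows at once from results already proved, and I would give two independent routes.

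First route: a discrete space is locally compact, since every point $x$ has the compact neighbourhood $\{x\}$. It is also Tychonoff. Theorem \ref{local} therefore applies directly and gives that $(B_1(X),\tau_{UC})$ is strong Choquet.

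Second route: $X=\bigoplus_{x\in X}\{x\}$ is a topological sum of one-point spaces, and each of these is trivially $\sigma$-compact. The theorem for topological sums of $\sigma$-compact spaces then gives the conclusion.

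I would also record the concrete description behind this, to make the result transparent:
\begin{itemize}
\item In a discrete space every function is continuous, so $B_1(X)=\Bbb R^X$.
\item The compact subsets of $X$ are exactly the finite sets, so $\tau_{UC}$ coincides with the topology of pointwise convergence.
\item Hence, by Lemma \ref{sum} (or directly), $(B_1(X),\tau_{UC})$ is homeomorphic to the product $\Bbb R^X$ with the Tychonoff topology.
\item $\Bbb R$ is completely metrizable and hence strong Choquet, and a product of strong Choquet spaces is strong Choquet \cite{Kech}.
\end{itemize}

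No step here is a real obstacle. The only point needing care is the identification of the compact sets of a discrete space with its finite sets, which is what makes $\tau_{UC}$ equal to the product topology. Otherwise the argument is just a matter of citing the earlier theorem.
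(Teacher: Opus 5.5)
Your proposal is correct, and your bulleted concrete description is exactly the paper's proof. Every function on a discrete space is continuous, so $B_1(X)=C(X)=\mathbb R^X$. Compact sets are finite, so $\tau_{UC}$ is the pointwise, i.e.\ product, topology. Finally, $\mathbb R^X$ is strong Choquet as a product of completely metrizable spaces.

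Your two headline routes instead obtain the proposition as a special case of the general theorems, and both are legitimate. A discrete space is locally compact and Tychonoff, so Theorem \ref{local} applies. Also $X=\bigoplus_{x\in X}\{x\}$ is a topological sum of $\sigma$-compact spaces.

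The second route is the paper's argument in disguise. For one-point summands, Theorem \ref{compact} only says that $\mathbb R$ is strong Choquet, and what remains is Lemma \ref{sum} plus the product theorem.

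The first route is genuinely different but buys nothing here. It makes an elementary fact depend on the most delicate argument in the paper: the zero-set covers, Lemma \ref{pointwise}, and the Alexandroff/Tietze extension of the limit from $G=\bigcup_n K_n$ to $X$. All of that trivialises when $X$ is discrete, since $D_n=K_n$ and every function is continuous. The direct argument is preferable because it is self-contained and identifies the space concretely as $\mathbb R^X$ with the product topology.
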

\begin{proof}
$B_1(X) = C(X) = \Bbb R^X$. Since $\Bbb R$ equipped with the usual Euclidean metric is a complete metric space, the product $\Bbb R^X$ is strong Choquet \cite{Kech}. Every compact set in $X$ is finite, thus the topology of uniform convergence on compacta on $B_1(X)$ coincides with the topology of pointwise convergence on $B_1(X)$.
\end{proof}

\begin{remark}
The result of the above Proposition for $X = \omega$ was mentioned in \cite{Os}.
\end{remark}

Recall that a topological space $X$ is called a $\lambda$-space \cite{Os} if every countable subset is $G_\delta$ in $X$. A subset $X$ of the real line $\Bbb R$ is called a $\lambda$-set if each countable subset $A \subset X$ is $G_\delta$ in $\Bbb R$.

\begin{definition}
A topological space $X$ is called a $\Lambda$-space if every countable union of compact sets is $G_\delta$ in $X$.
\end{definition}

\bigskip
\begin{theorem}
Let $X$ be a normal $\Lambda$-space. Then $(B_1(X),\tau_{UC})$ is strong Choquet.
\end{theorem}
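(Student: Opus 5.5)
We follow the pattern of the proofs of Theorem \ref{local} and Theorem \ref{compact}, using the same strategy $s$ for $\alpha$. Given $\beta$'s move $(f_n,U_n)$, $\alpha$ chooses a compact set $K_{n+1}\supset K_n$ and $\epsilon_{n+1}<\epsilon_n/2$ such that $W(f_n,K_{n+1},\epsilon_{n+1})\subset U_n$. It then plays $V_n=W(f_n,K_{n+1},\epsilon_{n+1}/2)$. No interior condition on the $K_n$ is needed here, since local compactness is not assumed.

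Fix a run consistent with $s$ and put $G=\bigcup_{n\in\omega}K_n$.

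\emph{Limit on $G$.} The estimate used before shows that $(f_n(x))_{n\in\omega}$ is Cauchy for every $x\in G$, so $f=\lim f_n$ exists on $G$. Moreover $|f(x)-f_m(x)|\le\epsilon_{m+1}/2$ for every $x\in K_{m+1}$ and every $m$.

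\emph{Extension off $G$.} Define $f^*=f$ on $G$ and $f^*=0$ on $X\setminus G$. If $f^*$ is Baire 1, then $f^*\in W(f_m,K_{m+1},\epsilon_{m+1})\subset U_m$ for every $m$. Since $W(f_m,K_{m+1},\epsilon_{m+1}/2)$ uses a strict inequality, we instead use $f^*\in\overline{V_{m+1}}\subset W(f_m,K_{m+1},\epsilon_{m+1})\subset U_m$. Because $U_{m+1}\subset V_m$, this gives $f^*\in\bigcap_m V_m$, so $\alpha$ wins.

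It remains to show $f^*$ is Baire 1. Since $X$ is normal, Proposition \ref{la} reduces this to showing that $f^*$ is $F_\sigma$-measurable. By Remark \ref{lub} and Theorem \ref{HL15}, it suffices to find, for each $\epsilon>0$, a countable cover of $X$ by closed sets on each of which $f^*$ has diameter at most $\epsilon$.

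\emph{Cover of $G$.} Repeat the argument of Theorem \ref{local} with closed sets in place of zero-sets, and with the compact (hence closed) sets $K_{k+1}$ in place of $D_k$. Choose $m$ with $\epsilon_0/2^m<\epsilon/3$. For each $k\ge m$, each $f_j$ is $F_\sigma$-measurable, so finitely many of them admit a common countable closed cover $(X^k_i)_{i\in\omega}$ with small oscillation; this is the analogue of Lemma \ref{finite}, using that $F_\sigma$ sets are closed under finite intersections. The sets $X^k_i\cap K_{k+1}$ are closed, cover $G$, and by the triangle inequality give $\operatorname{diam} f(X^k_i\cap K_{k+1})\le\epsilon$.

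\emph{Cover of $X\setminus G$.} This is where the $\Lambda$-property is used. Since $G$ is a countable union of compact sets, it is $G_\delta$ in $X$. Hence $X\setminus G$ is $F_\sigma$, a countable union of closed sets, and $f^*$ is constant ($=0$) on each of them.

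Together these give the required countable closed cover of $X$. (Equivalently, for open $V\subset\Bbb R$ the set $(f^*)^{-1}(V)$ is $(f^{-1}(V)\cap G)$, possibly united with $X\setminus G$. Here $f^{-1}(V)\cap G$ is $F_\sigma$ in $X$, being $F_\sigma$ in the $F_\sigma$ set $G$, and $X\setminus G$ is $F_\sigma$.)

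The main obstacle is to make sure the closed cover of $G$ consists of sets closed in $X$, not merely in $G$. Intersecting with the compact sets $K_{k+1}$ handles this, because in a Hausdorff space compact sets are closed. The second delicate point is the boundary issue in verifying $f^*\in V_m$, handled above by the factor $1/2$ in $\alpha$'s moves.
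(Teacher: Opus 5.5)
Your proof is correct and follows the paper's argument: the same strategy with increasing compacta $K_n$ and halving $\epsilon_n$, the pointwise limit on $G=\bigcup_n K_n$, extension by a constant off $G$ using that $G$ is $G_\delta$ (and $F_\sigma$), $F_\sigma$-measurability via Theorem \ref{HL15}, and then Proposition \ref{la}. You also make explicit two points the paper leaves implicit: the cover pieces must be closed in $X$, which intersecting with $K_{k+1}$ ensures, and $f^*\in U_{m+1}\subset V_m$ gives $f^*\in\bigcap_m V_m$. The aside claiming $f^*\in\overline{V_{m+1}}$ is not justified as written, since $f_n$ need not converge to $f^*$ in $\tau_{UC}$ off $G$, but it is also unnecessary: the bound $|f-f_m|\le\epsilon_{m+1}/2$ on $K_{m+1}$ already gives $f^*\in W(f_m,K_{m+1},\epsilon_{m+1})\subset U_m$.
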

\begin{proof}
We will define a winning strategy $s$ for the player $\alpha$ in $G^s(B_1(X))$. We will use a similar idea as in the proof of Theorem \ref{compact}.
Let $(f_0,U_0)$ be the first move of the player $\beta$. There is a compact set $K_0$ and positive $\epsilon_0$ such that $W(f_0,K_0,\epsilon_0) \subset U_0$. There are a compact set $K_1 \subset X$
and positive $\epsilon_1$ such that $K_0 \subset K_1$, $\epsilon_1 < \epsilon_0/2$ and
$W(f_0,K_1,\epsilon_1) \subset W(f_0,K_0,\epsilon_0)$. Put $s((f_0,U_0)) = V_0 = W(f_0,K_1,\epsilon_1/2)$.
Let $(f_1,U_1)$ be the next move of $\beta$. Then $f_1 \in U_1 \subset W(f_0,K_1,\epsilon_1/2)$.
There are a compact set $K_2$ and positive $\epsilon_2$ such that $K_1 \subset K_2$, $\epsilon_2 < \epsilon_1/2$ and $W(f_1,K_2,\epsilon_2) \subset U_1$. Put $s((f_0,U_0), V_0, (f_1,U_1)) = V_1 = W(f_1,K_2,\epsilon_2/2)$.
Suppose that for $n \in \omega$ we have a position

\bigskip

\centerline{$(f_0,U_0), V_0, ..., V_{n-1}, (f_n,U_n)$,}

\bigskip

and compact sets $K_i$, $\epsilon_i$ and $f_i \in B_1(X)$, $i \le n$ satisfying: $f_0 \in U_0$ and for $i \ge 1$
$f_i \in U_i \subset V_{i-1}$, $V_{i-1} = W(f_{i-1},K_i,\epsilon_i/2)$ and $K_{i-1} \subset K_i$ and $\epsilon_i < \epsilon_{i-1}/2$.
There are a compact set $K_{n+1}$ and positive $\epsilon_{n+1}$ such
$K_n \subset K_{n+1}$, $\epsilon_{n+1} < \epsilon_n/2$ and $W(f_n,K_{n+1},\epsilon_{n+1}) \subset U_n$.
Put $s(f_0,U_0), V_0, ..., V_{n-1}, (f_n,U_n)) = V_n = W(f_n,K_{n+1},\epsilon_{n+1}/2)$.
We will prove that $s$ is the winning strategy for $\alpha$.
Consider an arbitrary run consistent with $\alpha$. Put $L = \bigcup_{n\in\omega}K_n$.

For every $x\in L$, the sequence $(f_n(x))_{n\in\omega}$ is Cauchy. Let $\epsilon > 0$. There is $n_0\in\omega$ such that $x\in K_{n_0}$ and $\epsilon_0/2^{n_0} < \epsilon$. For every $n\ge m\ge n_0$ we have that $f_n\in W(f_m,K_{m+1},\epsilon_{m+1}/2)$. Then
$|f_n(x) - f_m(x)| <\epsilon_{m+1}/2< \epsilon_0/2^{n_0 + 2} < \epsilon$. There is a function $f: L \to \Bbb R$ that is a pointwise limit of $(f_n\upharpoonright_{L})_{n\in\omega}$.

Using Theorem \ref{HL15} and a similar idea as in the proof of Theorem \ref{local} we prove that for every $\epsilon > 0$ there is a countable cover $(X_i)_{i \in \omega}$ of $L$ consisting of closed sets such that $\text{diam}f_n(X_i) \le\varepsilon$, for all $i, n \in \omega$. By Lemma \ref{pointwise1} and Theorem \ref{HL15}, the function $f$ is also $F_\sigma$ measurable on the set $L$. Define the function $f^*: X \to \Bbb R$ as follows: $f^*(x) = f(x)$ if $x \in L$ and $f^*(x) = 1$ otherwise. Since the set $L$ is $G_\delta$, the function $f^*$ is $F_\sigma$ measurable. Since the space $X$ is normal, by Proposition \ref{la}, the function $f^*$ is Baire 1. Also

\bigskip

\centerline{ $f^*\in W(f_m,K_{m+1},\epsilon_{m+1})$, for every $m\in\omega,$}
\bigskip

since for every $n > m$ we have that $f_n\in W(f_m,K_{m+1},\epsilon_{m+1}/2)$.
\end{proof}

\bigskip

Recall that a topological space $X$ is called a $Q$-space \cite{BMZ} if every subset of $X$ is $F_\sigma$ in $X$. Of course, every $Q$-space is a $\Lambda$-space.

\bigskip

Under Martin’s Axiom and the negation of the Continuum Hypothesis, there exists an uncountable set $X \subset \Bbb R$ such that each subset of $X$ is $F_\sigma$ in $X$; see \cite{Bu}, Theorem 9.6.

\bigskip
\begin{corollary}
Let $X$ be a normal $Q$-space. Then $(B_1(X),\tau_{UC})$ is strong Choquet.
\end{corollary}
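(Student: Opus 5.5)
The corollary should follow directly from the preceding theorem on normal $\Lambda$-spaces. The plan is therefore only to check that every $Q$-space is a $\Lambda$-space and then apply that theorem.

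Let $X$ be a normal $Q$-space and let $L=\bigcup_{n\in\omega}K_n$ be any countable union of compact sets in $X$. The complement $X\setminus L$ is a subset of $X$, so by the definition of a $Q$-space it is $F_\sigma$ in $X$. Hence $L$ is $G_\delta$ in $X$, and $X$ is a $\Lambda$-space. The preceding theorem now says that $(B_1(X),\tau_{UC})$ is strong Choquet.

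There is no genuine obstacle here, since all the work was done in the theorem for normal $\Lambda$-spaces. The only point deserving a remark is that normality is not automatic and has to stay as a hypothesis. In that theorem the extended function $f^*$ (equal to $1$ off $L$) was shown to be $F_\sigma$ measurable, and normality was then used through Proposition \ref{la} to conclude that $f^*$ is Baire 1. That is exactly why normality is kept in the statement of the corollary.
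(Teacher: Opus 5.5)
Your proposal is correct and matches the paper's argument: the paper observes that every $Q$-space is a $\Lambda$-space (your argument that $X\setminus L$ is $F_\sigma$, so $L$ is $G_\delta$) and then applies the theorem for normal $\Lambda$-spaces. Your remark that normality must be kept is also accurate, since it is what allows Proposition~\ref{la} to turn the $F_\sigma$ measurable extension $f^*$ into a Baire 1 function.
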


\bigskip

\bigskip

Acknowledgements. This work was supported by the Slovak Research and Development Agency under the Contract no. APVV-20-0045 and by the grant VEGA 2/0009/26.

\end{document}